\tikzset{
    %Define standard arrow tip
    >=stealth',
    %Define style for boxes
    punkt/.style={
           rectangle,
           rounded corners,
           draw=black, very thick,
           text width=6.5em,
           minimum height=2em,
           text centered},
    %Define arrow style
    pil/.style={
           ->,
           thick,
           shorten <=2pt,
           shorten >=2pt,}
}
\newcommand{\abs}[1]{\left| #1 \right|}
\newcommand{\set}[1]{\left\{ #1 \right\}}
\newcommand{\Z}{\mathbb{Z}}
\newcommand{\R}{\mathbb{R}}
\newcommand{\ga}{\alpha}
\newcommand{\gb}{\beta}
\newcommand{\gr}{\gamma}
\newcommand{\gG}{\Gamma}
\newcommand{\gd}{\delta}
\newcommand{\gth}{\theta}
\newcommand{\gf}{\varphi}
\newcommand{\p}{\pi}
\newcommand{\s}{\sigma}
\newcommand{\gk}{\kappa}
\newcommand{\ve}{\varepsilon}
\newcommand{\go}{\omega}
\newcommand{\gO}{\Omega}
\newcommand{\del}{\partial}
\newcommand{\oo}{\infty}
\newcommand{\scr}[1]{\mathscr{#1}}
\renewcommand{\le}{\leqslant}
\renewcommand{\ge}{\geqslant}
\newcommand{\scrA}{\scr{A}}
\renewcommand{\ss}{\mathfrak{S}}
\renewcommand{\pmod}[1]{\ (\text{mod}\ #1)}
\theoremstyle{plain}
\newtheorem{lemma}{Lemma}[section]
\newtheorem{thm}{Theorem}
\theoremstyle{definition}
\theoremstyle{remark}
\begin{document}

\title{Almost-Prime Polynomials with prime arguments}

\author{P.-H. Kao}

\address{Central Michigan University}

\email{kao1p@cmich.edu}

\begin{abstract}
    We improve Irving's method of the double-sieve~\cite{Irv} by using the DHR sieve. By extending the upper and lower bound sieve functions into their respective non-elementary ranges, we are able to make improvements on the previous records on the number of prime factors of irreducible polynomials at prime arguments. In particular, we prove that irreducible quadratics over $\Z$ satisfying necessary local conditions are $P_4$ infinitely often.
\end{abstract}

\maketitle

\section{Introduction}

V.\ Bouniakowsky conjectured~\cite{Bun} that under suitable hypotheses, any irreducible polynomial $f \in \Z[x]$ is prime for infinitely many integral arguments $n$. The conjecture is beyond the reach of current technology. Sieve methods, however, can provide approximations to show that such an $f(n)$ is infinitely often a $P_r$-number, that is,  $\gO(f(n)) \le r$ for infinitely many $n$. One can similarly consider bounds on the number of prime factors of $f(p)$, where $p$ is a prime.

The first result in this direction was due to R.\ J.\ Miech~\cite{Mie}, who used Brun's sieve and Renyi's equidistribution theorem (which was a precursor of the Bombieri--Vinogradov theorem) to show that under the hypotheses of the Bouniakowsky conjecture, and if $k$ is the degree of $f$, then $f(p)$ is infinitely often a $P_{ck}$ for some fixed constant $c$. H.-E.\ Richert~\cite{Ric} later reduced $ck$ to $2k+1$. The main ingredients of Richert's proof consist of a weighted Selberg's sieve and the Bombieri--Vinogradov theorem. The weights in Richert's proof were restricted to the primes $p < x^{1/2}$. In his much celebrated paper~\cite{Chen}, Chen showed that $f(p)$ is infinitely often a $P_2$-number for $k=1$. Except for Chen's achievement, the subject remained dormant for nearly fifty years until the improvements made by A.\ J.\ Irving~\cite{Irv}. Irving's innovation was to observe that one could incorporate weights with primes $> x^{1/2}$ by applying a two-dimensional sieve to the sequence $\set{nf(n)}$. By appealing to the one-and two-dimensional beta sieves~\cite[Theorem 11.13]{FI}, he was able to show that $f(p)$ is infinitely often a $P_{r_0(k)}$, where $r_0(k)$ is given by the table below.

\begin{table}[h]
\centering
\renewcommand{\arraystretch}{1.2}
\begin{tabular}{@{}crrrrrrrrr@{}}\toprule\label{T:1}
         $k$ & 2 & 3 & 4 &  5 &  6 &  7 &  8 &  9 & 10\\ \hline
    $r_0(k)$ & 5 & 6 & 8 & 10 & 11 & 12 & 14 & 15 & 16\\ \bottomrule
\end{tabular}
\end{table}

For large $k$, he showed that
\[
    r_0(k) = k + c_0 \log k + O(1),
\]
with $c_0 = 3.120....$

As one of the concluding remarks in~\cite{Irv}, Irving suggested the possibility of improvements by applying the Diamond--Halberstam--Richert sieve in place of the the beta sieve. We carry out his suggestion and extend all sifting functions into their respective non-elementary ranges. We prove the following theorem.

\begin{thm}\label{T:K}
  	Let $f(x) \in \Z[x]$ be an irreducible polynomial of degree $k$ with a positive leading coefficient. Suppose further that for all primes $p$, we have
	\[
		\#\{ a \pmod{p} : (a,p) = 1 \text{ and } f(a) \equiv 0 \pmod{p}\} < p - 1.
	\]
    Then there exists a function $r(k)$ such that for sufficiently large $x$ and for $r \ge k$, we have
    \[
        \#\{ x < p \le 2x : f(p) \in P_r \} \gg_{f,r} \frac{x}{(\log x)^2}.
    \]
    Moreover, for sufficiently large $k$, $f(p)$ is infinitely often a $P_{r(k)}$-number for
	\[
		r(k) = k + c \log k + O(1)
	\]
	with $c = 1.18751...$. For small values of $k$, the same result is true with values of $r(k)$ provided in Table~\ref{T:1}.

    \begin{table}[h]
    \centering
	\renewcommand{\arraystretch}{1.2}
    \begin{tabular}{@{}crrrrrrrrr@{}}\toprule\label{T:1}
        $k$ & 2 & 3 & 4 & 5 & 6 & 7 & 8 & 9 & 10\\ \hline
		$r(k)$ & 4 & 6 & 7 & 9 & 10 & 11 & 12 & 13 & 15\\ \bottomrule
    \end{tabular}
	\label{Table:1}
	\vspace{6pt}
	\caption{Values of $r(k)$ for small $k$}
	\end{table}
\end{thm}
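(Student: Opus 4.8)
The plan is to follow the strategy of Irving~\cite{Irv}, replacing the one- and two-dimensional beta sieves with their Diamond--Halberstam--Richert counterparts and pushing every sieve function into its non-elementary range. The starting point is to apply a weighted sieve to the sequence $\scr{A} = \set{f(p) : x < p \le 2x}$. The sifting set consists of primes $q$ up to some $z = x^{1/u}$, and the dimension of the problem is $\kappa = k$ because for a typical prime $q$ the congruence $f(p) \equiv 0 \pmod q$ has roughly $k$ admissible residue classes for $p$ coprime to $q$. The local condition $\#\{ a \bmod p : (a,p)=1,\ f(a) \equiv 0 \bmod p\} < p-1$ guarantees that the singular series does not vanish, so the sieve produces a genuinely positive main term. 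To count primes in residue classes to large moduli, I would invoke the Bombieri--Vinogradov theorem, which gives level of distribution $x^{1/2-\ve}$; this is exactly the obstruction that restricts weighted contributions to primes $< x^{1/2}$ in the classical Richert argument.

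Next I would set up Irving's double sieve. The key idea is to treat the contribution of the primes $q \in [x^{1/2}, x^{\theta})$ for some $\theta < 1$ not via Bombieri--Vinogradov directly on $\scr{A}$, but by applying a \emph{second}, two-dimensional sieve to the auxiliary sequence $\set{n f(n) : n \sim x}$ (equivalently, to the pair formed by $p$ and $f(p)$), for which one has a larger level of distribution. Concretely, one bounds the number of $p$ for which $f(p)$ has a prime factor in a dyadic range $[P, 2P]$ with $P \ge x^{1/2}$ by sieving the two-dimensional sequence indexed by $(p, f(p)/q)$; the two-dimensional sieve has dimension $\kappa = k+1$ (one for the $p$-variable, $k$ for $f$), and the relevant upper-bound sieve function is $F_{k+1}$. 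Summing the resulting bounds over dyadic ranges $P = x^{\theta_j}$ and optimizing the break points $\theta_j$ is where the Diamond--Halberstam--Richert functions buy an improvement over the beta sieve: their $F_\kappa$ is smaller, and crucially $f_\kappa$ becomes positive for smaller values of the sifting parameter, so one may take $z$ somewhat larger and still retain a positive lower-bound main term. I would carry out the combinatorial bookkeeping that expresses the final weight $r(k)$ as a threshold condition of the form "the lower-bound functional, after subtracting all the upper-bound losses from the dyadic ranges, is still positive", and then numerically solve for the least integer $r$ satisfying it, producing Table~\ref{Table:1}; for large $k$ one replaces the numerics by the known asymptotics of $F_\kappa, f_\kappa$ (namely $F_\kappa(s), f_\kappa(s) = 1 + O((2e^\gamma \kappa / s)^{\,?})$-type expansions and the location of the first sign change of $f_\kappa$ near $s \approx$ a fixed multiple of $\kappa$) to extract the constant $c = 1.18751\dots$ in $r(k) = k + c\log k + O(1)$.

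The main obstacle I expect is not the arithmetic input — Bombieri--Vinogradov and the local density computations are standard — but the analytic control of the Diamond--Halberstam--Richert sieve functions in their \emph{non-elementary} ranges. The DHR functions $F_\kappa, f_\kappa$ are defined as solutions of a system of differential-difference equations, and unlike the beta sieve one does not have clean closed forms; extending Irving's argument requires (i) knowing $F_\kappa$ and $f_\kappa$ accurately, including their first sign changes, well beyond the range $s \le \beta(\kappa)$ where they coincide with the elementary upper/lower bounds, and (ii) verifying that the inequalities feeding the double sieve — the "switching" identities relating $f_\kappa$ at argument $s$ to an integral of $F_\kappa$ at smaller arguments — remain valid and remain sharp there. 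I would handle this by importing the relevant monotonicity and integral identities for $F_\kappa, f_\kappa$ from the Diamond--Halberstam--Richert theory (as recorded in~\cite{FI}) and doing the optimization over the dyadic break points $\theta_j$ numerically, checking at the end that the optimum genuinely lies in the non-elementary range (otherwise no improvement over~\cite{Irv} would result). A secondary technical point is ensuring that the error terms from the two-dimensional sieve applied to $\set{nf(n)}$ are uniform enough to sum over all $O(\log x)$ dyadic ranges without destroying the main term, which amounts to a careful but routine tracking of the level of distribution through each application.
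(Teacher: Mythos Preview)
Your outline follows Irving's double-sieve strategy and the paper's idea of substituting DHR sieve functions, but it contains a fundamental error in the sieve dimensions that would derail the whole computation.

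You assert that the dimension of the main sieve on $\scr{A}=\{f(p):x<p\le 2x\}$ is $\gk=k$, and that the auxiliary sieve on $\{nf(n)\}$ has dimension $\gk=k+1$. Both are wrong. For an irreducible $f$ of degree $k$, the number of roots of $f$ modulo $p$ is on average $1$ (this is the Prime Ideal Theorem, reflected in the Mertens estimate $\sum_{p\le x}\nu_1(p)(\log p)/(p-1)=\log x+O(1)$), so the density function $\nu_1$ satisfies the $\gk=1$ condition~\eqref{E:gO_condition}. The relevant lower- and upper-bound functions are therefore $f_1$ and $F_1$, not $f_k$ and $F_k$. Likewise, for the sequence $\{nf(n)\}$ the density $\nu_2(p)$ counts one root from the factor $n$ plus on average one root from $f(n)$, giving dimension $\gk=2$; the correct upper-bound function is $F_2$, not $F_{k+1}$. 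The paper's key inequality~\eqref{E:r_ineq} is built entirely out of $f_1$, $F_1$, $F_2$, and the constant $c=2e^{\gr}/(3f_1(6))$ comes from evaluating these specific functions at the chosen $\ga_0=1/12$. With your dimensions the numerics would be meaningless and the asymptotic $r(k)=k+c\log k+O(1)$ would not emerge.

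A few smaller points: there is no dyadic decomposition over break points $\gth_j$; the paper makes a single split at $u=N^{\gd}$ (with $\gd<\gth_1<1/(2k)$) and handles $[z,u)$ by the one-dimensional upper bound sieve with Bombieri--Vinogradov, and $[u,y)$ by the two-dimensional upper bound sieve on $\scr{A}'=\{nf(n)\}$ with level $N^{\gth_2}/p$, $\gth_2<1/k$, where the error is controlled trivially via Lemma~\ref{L:E_easy} rather than by Bombieri--Vinogradov. The optimization is then over the three scalars $\ga_0,\gd_0,\gb_0$, not over a family of dyadic thresholds.
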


We remark that Theorem~\ref{T:K} improves Irving's results except for $k = 3$. In particular, we are able to reduce $r(2)$ from 5 to 4.

Wu and Xi very recently announced the result that irreducible quadratics at prime arguments are $P_4$ infinitely often~\cite{WX}. They approached the problem by employing composition of the linear sieves rather than appealing to the two-dimensional sieve as Irving did. On the other hand, Wu and Xi extended the level of distribution up to $x^{0.79}$. It is unclear if their argument could be generalized to polynomials of higher degree.

\section{Preliminaries}

Throughout this paper, the letters $p$ and $q$ are used to denote primes. For any natural number $n$, we let $\go(n)$ denote the number of distinct prime factors of $n$ not counting multiplicities; $\gO(n)$ count the number of prime factors of $n$ with multiplicities. $\gf(n)$ denotes Euler's totient function. $f \sim g$ denotes $f/g \to 1$ as $x \to \oo$; $f = o(g)$ denotes $f/g \to 0$ as $x \to \oo$. We write $f = O(g)$ or, equivalently, $f \ll g$ if $\abs{f} \le Cg$ for some real number $C$. We remark that all implied constants will depend on $f$ unless otherwise indicated.

The sequence of interest in this paper is
\begin{equation}\label{E:scrA}
	\scrA = \set{f(p) : p \in (x,2x]},
\end{equation}
where $f$ is irreducible over $\Z$ and has nonzero constant term. Denote $k = \deg f$ and let $N = \max \scr{A}$. Then
\[
	N \sim a_k x^k,
\]
where $a_k > 0$ is the leading coefficient of $f$. We denote $|\scrA| = X$, whence by the Prime Number Theorem,
\[
    X \sim \frac{x}{\log x}.
\]

We introduce the following arithmetic functions that are necessary in the sieve constructions. For squarefree $d$, let
\begin{equation}\label{E:nu_1}
	\nu_1(d) = \#\{ a \pmod{d} : (a,d) = 1 \text{ and } f(a) \equiv 0 \pmod{d} \}
\end{equation}
and
\begin{equation}\label{E:nu_2}
	\nu_2(d) = \#\{ a \pmod{d} : af(a) \equiv 0 \pmod{d} \}.
\end{equation}
We also provide Mertens' type estimates for $\nu_1$ and $\nu_2$, as they are frequently needed in sieve estimates.
\begin{lemma}
    For $x \ge 2$, we have
    \begin{align}
        \sum_{p \le x} \frac{\nu_1(p) \log p}{p-1} &= \log x + O(1),\label{E:nu_1_sum}\\
        \prod_{p \le x} \Bigl( 1 - \frac{\nu_1(p)}{p-1} \Bigr) &= \frac{e^{-\gr}\ss(f)}{\log x} (1 + o(1)),\label{E:nu_1_prod}\\
        \sum_{p \le x} \frac{\nu_2(p) \log p}{p} &= 2 \log x + O(1),\label{E:nu_2_sum}\\
        \prod_{p \le x} \Bigl( 1 - \frac{\nu_2(p)}{p} \Bigr) &= \frac{e^{-2\gr} \ss(f)}{(\log x)^2} (1 + o(1)),\label{E:nu_2_prod}
    \end{align}
    where $\ss(f)$ is defined by
    \[
        \ss(f) = 2 \prod_{p > 2} \left( 1 - \frac{\nu_1(p)}{p-1} \right) \left( 1 - \frac{1}{p} \right)^{-1}.
    \]
\end{lemma}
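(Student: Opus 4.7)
The plan is to reduce all four estimates to~\eqref{E:nu_1_sum} together with the classical Mertens theorems. A direct inclusion–exclusion on the sets $\{a\pmod p:a\equiv 0\}$ and $\{a\pmod p:f(a)\equiv 0\}$ shows that $\nu_2(p)=\nu_1(p)+1$ for every prime $p$: the overlap is $\{0\}$ precisely when $p\mid f(0)$, which is also the case in which $\nu_1$ omits the root $a\equiv 0$ from the full root count $\rho(p):=\#\{a\pmod p:f(a)\equiv 0\}$. Together with the identity $1-\nu_2(p)/p=(1-1/p)(1-\nu_1(p)/(p-1))$, this shows that~\eqref{E:nu_2_sum} follows from~\eqref{E:nu_1_sum} combined with Mertens' $\sum_{p\le x}\log p/p=\log x+O(1)$, and~\eqref{E:nu_2_prod} follows from~\eqref{E:nu_1_prod} combined with $\prod_{p\le x}(1-1/p)\sim e^{-\gr}/\log x$. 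So only~\eqref{E:nu_1_sum} and~\eqref{E:nu_1_prod} require real work.

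\textbf{Proof of~\eqref{E:nu_1_sum}.} Since $\nu_1(p)=\rho(p)-[p\mid f(0)]$ agrees with $\rho(p)$ on all but finitely many primes and $1/(p-1)=1/p+O(1/p^2)$, it suffices to prove
\[
    \sum_{p\le x}\frac{\rho(p)\log p}{p}=\log x+O(1).
\]
For $p$ unramified in the number field $K:=\Q[x]/(f(x))$, $\rho(p)$ is exactly the number of degree-one prime ideals of $K$ above $p$; higher-degree primes and the finitely many ramified primes contribute $O(\sqrt x\log x)$. Landau's prime ideal theorem for $K$ (driven by the simple pole of $\zeta_K$ at $s=1$) therefore gives $\sum_{p\le x}\rho(p)\log p = x+O(x\exp(-c\sqrt{\log x}))$, and partial summation delivers the displayed asymptotic.

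\textbf{Proof of~\eqref{E:nu_1_prod}.} Abel summation on~\eqref{E:nu_1_sum} produces a constant $C$ with $\sum_{p\le x}\nu_1(p)/(p-1)=\log\log x+C+O(1/\log x)$. Since $\nu_1(p)\le\deg f$ the series $\sum_p(\nu_1(p)/(p-1))^2$ is convergent, and the local hypothesis $\nu_1(p)<p-1$ of Theorem~\ref{T:K} keeps every Euler factor bounded away from $1$, so expanding $\log(1-y)=-y+O(y^2)$ yields an asymptotic of the form $c_f/\log x\cdot(1+o(1))$. To identify $c_f=e^{-\gr}\ss(f)$, factor out Mertens' product:
\[
    \prod_{p\le x}\Bigl(1-\frac{\nu_1(p)}{p-1}\Bigr)=\prod_{p\le x}\Bigl(1-\frac{1}{p}\Bigr)\cdot\prod_{p\le x}\frac{1-\nu_1(p)/(p-1)}{1-1/p}.
\]
The first factor contributes $e^{-\gr}/\log x\cdot(1+o(1))$. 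The tail ratio converges absolutely because $\sum_p(\nu_1(p)-1)\log p/(p-1)=O(1)$, which is~\eqref{E:nu_1_sum} minus Mertens. Isolating the $p=2$ factor (equal to $2$ once one checks $\nu_1(2)=0$ from the local condition) leaves the product defining $\ss(f)/2$, and~\eqref{E:nu_1_prod} follows.

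\textbf{Main obstacle.} The only genuinely non-elementary input is the prime ideal theorem for $K$; everything after is Abel summation and Euler-factor bookkeeping. The subtlest step is pinning down the constant in~\eqref{E:nu_1_prod}: one has to track both the factor of $2$ and the restriction $p>2$ in the definition of $\ss(f)$ through the Mertens comparison, which in turn requires confirming $\nu_1(2)=0$ from the local condition $\nu_1(p)<p-1$.
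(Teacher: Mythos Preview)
Your argument is correct and follows the standard route---indeed the paper does not give its own proof at all but simply refers to \cite[Lemma~2.1]{Irv}, where essentially the same reduction to the prime ideal theorem (equivalently, to a Chebotarev/Landau density statement for the splitting field of $f$) is carried out. Your key observations $\nu_2(p)=\nu_1(p)+1$ and $1-\nu_2(p)/p=(1-1/p)(1-\nu_1(p)/(p-1))$ are exactly what makes the $\nu_2$ estimates fall out of the $\nu_1$ ones, and the identification of the constant via $\nu_1(2)=0$ is handled correctly.

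One small wording issue: in the proof of~\eqref{E:nu_1_prod} you claim the tail ratio ``converges absolutely'' from $\sum_{p\le x}(\nu_1(p)-1)\log p/(p-1)=O(1)$. In general this series is \emph{not} absolutely convergent (for $f(x)=x^2+1$, say, $\nu_1(p)-1=\pm1$ along the residue classes mod~$4$), so the product over $p$ of $(1-\nu_1(p)/(p-1))(1-1/p)^{-1}$ is only conditionally convergent. What you actually need is that the partial sums converge, and that does follow from the $O(1)$ bound by a second Abel summation against $1/\log p$. Replace ``absolutely'' by ``as $x\to\infty$'' and the argument is complete.
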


\begin{proof}
    See the proof of~\cite[Lemma 2.1]{Irv}.
\end{proof}

We adopt the following standard sieve theoretic notations and hypotheses. Let
\[
    S(\scrA, z) = \sum_{\substack{n \in \scrA\\ (n,P(z))=1}} 1,
\]
where
\[
    P(z) = \prod_{\substack{\text{$p$ prime}\\ p \le z}} p.
\]
Define
\[
    \scrA_d = \{ n \in \scrA : n \equiv 0 \pmod{d} \}.
\]
We assume that there exists some non-negative multiplicative function $\nu(\cdot)$ such that
\[
    0 \le \nu(p) < p,
\]
for primes $p$ in relevant range according to applications so that the error term
\[
    R_d = \abs{\scrA_d} - \frac{\nu(d)}{d}X,
\]
whenever $d|P(z)$, is on average small, over some restricted range of values of $d$. We also assume that there exist $\gk > 1$ and $A > 1$ such that
\begin{equation}\label{E:gO_condition}
    \prod_{w_1 \le p < w_2} \biggl( 1 - \frac{\nu(p)}{p} \biggr)^{-1} \le \biggl( \frac{\log w_2}{\log w_1} \biggr)^{\gk} \biggl( 1 + \frac{A}{\log w_1} \biggr),
\end{equation}
where $2 \le w_1 \le w_2$. An immediate consequence of~\eqref{E:gO_condition} is that
\[
    \prod_{w_1 \le p < w_2} \biggl( 1 - \frac{\nu(p)}{p} \biggr)^{-1} \ll (\log w_2)^{\gk}.
\]
One can easily see that equations~\eqref{E:nu_1_prod} and~\eqref{E:nu_2_prod} are examples of~\eqref{E:gO_condition}.

Let
\[
    E(x,d) = \max_{\substack{1 \le m \le d\\ (m,d)=1}} \Bigl| \p(x;d,m) - \frac{X}{\gf(d)} \Bigr|,
\]
then the Bombieri--Vinogradov theorem~\cite[Chapter 28]{Dav} states that for any constant $A > 0$, there exists a number $B = B(A)$ such that
\[
    \sum_{d \le x^{1/2} (\log x)^{-B}} E(x,d) \ll \frac{x}{(\log x)^A}.
\]
The following two lemma are useful in bounding the error terms in later sections.

\begin{lemma}\label{L:E_hard}
    With $E$, $A$, and $B$ as above and let $K > 0$ be any constant. Then
    \[
        \sum_{d \le x^{1/2}(\log x)^{-B}} \mu^2(d) K^{\go(d)} E(x,d) \ll \frac{x}{(\log x)^{(A-K^2)/2}}.
    \]
\end{lemma}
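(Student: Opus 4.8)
The plan is to bound the sum by separating the $K^{\go(d)}$ weight from the error term $E(x,d)$ via Cauchy--Schwarz, then feeding one factor into Bombieri--Vinogradov and estimating the other by a standard Mertens-type product. Concretely, I would write
\[
    \sum_{d \le D} \mu^2(d) K^{\go(d)} E(x,d)
    = \sum_{d \le D} \bigl( \mu^2(d) K^{\go(d)} E(x,d)^{1/2} \bigr) \cdot E(x,d)^{1/2},
\]
where $D = x^{1/2}(\log x)^{-B}$, and apply Cauchy--Schwarz to get
\[
    \sum_{d \le D} \mu^2(d) K^{\go(d)} E(x,d)
    \le \Bigl( \sum_{d \le D} \mu^2(d) K^{2\go(d)} E(x,d) \Bigr)^{1/2}
        \Bigl( \sum_{d \le D} E(x,d) \Bigr)^{1/2}.
\]
The second factor is directly controlled by the Bombieri--Vinogradov theorem: for any $A_1 > 0$ it is $\ll x/(\log x)^{A_1}$, and taking square roots contributes $x^{1/2}/(\log x)^{A_1/2}$.

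For the first factor I would use the trivial bound $E(x,d) \ll X/\gf(d) + \log x \ll x/\gf(d)$ (valid since $\p(x;d,m) \le x/d + 1$ and the sum over $m$ is implicit in the definition, though here $E$ is already a single maximum so $E(x,d) \le x/\gf(d) + X/\gf(d) \ll x/\gf(d)$), reducing the problem to estimating
\[
    \sum_{d \le D} \frac{\mu^2(d) K^{2\go(d)}}{\gf(d)}.
\]
Since $K^{2\go(d)}/\gf(d)$ is multiplicative and $\sum_{p \le D} K^2/(p-1) = K^2 \log\log D + O(1)$, the standard estimate gives
\[
    \sum_{d \le D} \frac{\mu^2(d) K^{2\go(d)}}{\gf(d)} \ll (\log D)^{K^2} \ll (\log x)^{K^2}.
\]
Hence the first factor in the Cauchy--Schwarz bound is $\ll \bigl( x (\log x)^{K^2} \bigr)^{1/2} = x^{1/2} (\log x)^{K^2/2}$. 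Multiplying the two factors yields
\[
    \sum_{d \le D} \mu^2(d) K^{\go(d)} E(x,d) \ll \frac{x^{1/2}(\log x)^{K^2/2} \cdot x^{1/2}}{(\log x)^{A_1/2}} = \frac{x}{(\log x)^{(A_1 - K^2)/2}}.
\]
Choosing $A_1 = A$ (which determines the admissible $B = B(A)$ in Bombieri--Vinogradov) gives exactly the claimed exponent $(A - K^2)/2$.

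The main subtlety — not really an obstacle, but the point requiring care — is the interplay between the two applications of Bombieri--Vinogradov: the theorem as stated fixes $B$ in terms of the target exponent, so I must apply it once with exponent $A$ to get the $x/(\log x)^A$ bound on $\sum_{d \le D} E(x,d)$, and the same truncation level $D = x^{1/2}(\log x)^{-B(A)}$ is then used throughout. One should also note that the Mertens estimate $\sum_{d \le D} \mu^2(d) K^{2\go(d)}/\gf(d) \ll (\log x)^{K^2}$ is uniform in the sense needed because $K$ is a fixed constant; the implied constant in the final bound depends on $K$, $A$, and $f$, which is consistent with the conventions set in the Preliminaries. Everything else is routine: the multiplicativity of $K^{2\go(d)}/\gf(d)$, the comparison $\gf(d) \gg d/\log\log d$, and Rankin's trick or a direct Euler-product argument all give the $(\log x)^{K^2}$ power cleanly.
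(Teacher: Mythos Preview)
Your proposal is correct and follows precisely the approach indicated in the paper: the paper's own proof merely says ``The proof is an exercise in utilizing the Cauchy--Schwarz inequality'' and refers to \cite[Lemma~8.1]{DHR}, which is exactly the argument you have written out---split via Cauchy--Schwarz, apply Bombieri--Vinogradov to one factor, and bound the other by the trivial estimate $E(x,d) \ll x/\gf(d)$ together with the Euler-product bound $\sum_{d\le D}\mu^2(d)K^{2\go(d)}/\gf(d)\ll(\log x)^{K^2}$. One small remark: there is only a single application of Bombieri--Vinogradov (to the factor $\sum_{d\le D}E(x,d)$), not two, so your closing discussion about ``the interplay between the two applications'' is slightly misworded, though your handling of the dependence of $B$ on $A$ is correct.
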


\begin{proof}
    The proof is an exercise in utilizing the Cauchy--Schwarz inequality. See the proof of~\cite[Lemma 8.1]{DHR}.
\end{proof}

The next lemma provides a trivial estimate on the error term without using the Bombieri--Vinogradov theorem.

\begin{lemma}\label{L:E_easy}
    Suppose that~\eqref{E:gO_condition} hold, and that
    \[
        \abs{R_d} \le \nu(d)
    \]
    whenever $d|P(z)$. Let $K > 0$, then
    \[
        \sum_{\substack{d < z^2\\ d|P(z)}} K^{\go(d)} |R_d| \le \frac{z^2}{V(z)^K},
    \]
    where
    \[
        V(z) = \prod_{p < z} \biggl( 1 - \frac{\nu(p)}{p} \biggr).
    \]
\end{lemma}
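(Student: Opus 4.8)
The plan is to estimate the sum by splitting $K^{\go(d)}$ multiplicatively across the prime factors of $d$ and then bounding the resulting product. Since $d \mid P(z)$ is squarefree, we have $K^{\go(d)} = \prod_{p \mid d} K$, and together with the hypothesis $|R_d| \le \nu(d)$ and the multiplicativity of $\nu$, the summand factors as $\prod_{p\mid d}\bigl(K\,\nu(p)\bigr)$. Hence
\[
    \sum_{\substack{d < z^2\\ d\mid P(z)}} K^{\go(d)}|R_d|
    \le \sum_{\substack{d < z^2\\ d\mid P(z)}} \prod_{p\mid d} K\nu(p)
    \le \prod_{p < z} \bigl( 1 + K\nu(p) \bigr).
\]
The first step, then, is just this factoring and the extension of the sum over all squarefree divisors of $P(z)$, which only increases it.

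Next I would bound the product $\prod_{p<z}(1 + K\nu(p))$. Because $\nu(p) < p$, we have $1 + K\nu(p) \le (1 + \nu(p))^{K'}$ for a suitable comparison, but the cleaner route is to write, for each $p$,
\[
    1 + K\nu(p) \le \Bigl(1 - \frac{\nu(p)}{p}\Bigr)^{-K} \cdot p \cdot \Bigl(1 - \frac{\nu(p)}{p}\Bigr)^{K}\bigl(1 + K\nu(p)\bigr)\Big/ p,
\]
which is circular; instead the intended argument is surely the elementary inequality $1 + K\nu(p) \le \bigl(1-\nu(p)/p\bigr)^{-K}$ valid because $1 - \nu(p)/p \le (1+K\nu(p))^{-1/K}$ would need checking. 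The robust version: since $0 \le \nu(p)/p < 1$, one has $\bigl(1 - \nu(p)/p\bigr)^{-K} \ge 1 + K\nu(p)/p \cdot(\text{something})$, so I would instead bound trivially $1 + K\nu(p) \le p\bigl(1 - \nu(p)/p\bigr)^{-K}\bigl(1 - \nu(p)/p\bigr)^{K}(1+K\nu(p))/p \le p \cdot \text{(bounded factor)}$. The honest and standard path is:
\[
    \prod_{p<z}\bigl(1 + K\nu(p)\bigr)
    \le \prod_{p<z} p \cdot \prod_{p<z}\Bigl(1 - \frac{\nu(p)}{p}\Bigr)^{-K}
    = P(z)\, V(z)^{-K} \le z^{2}\, V(z)^{-K},
\]
where the last inequality uses $P(z) \le z^{\pi(z)}$ — no, that overshoots; one uses $P(z) < z^2$ only for very small $z$. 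So the correct final bound comes from comparing $1+K\nu(p)$ with $\bigl(1-\nu(p)/p\bigr)^{-K}$ times $p$-independent data and noting that the product of $p$ over $p<z$ is subsumed by the hypothesis-driven range $d<z^2$.

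The main obstacle, then, is supplying the precise elementary inequality that turns $1 + K\nu(p)$ into $p^{\,o(1)}\bigl(1-\nu(p)/p\bigr)^{-K}$ per prime, so that the product over $p<z$ of the $p$-factors is controlled by $z^2$ (exactly the range restriction $d < z^2$ in the statement) and the product of the remaining factors is $V(z)^{-K}$. Concretely, I expect the clean statement is: for each prime $p$ with $0 \le \nu(p) < p$,
\[
    1 + K\nu(p) \;\le\; p\Bigl(1 - \frac{\nu(p)}{p}\Bigr)^{-K},
\]
which after multiplying over $p < z$ and using $\prod_{p<z} p = e^{(1+o(1))z} $... — here is the genuine difficulty: $\prod_{p<z}p$ is \emph{not} $\le z^2$, so the range $d<z^2$ must instead be exploited by counting only those squarefree $d<z^2$, each contributing at most $\max_{d<z^2} K^{\go(d)} |R_d| \le K^{\go(d)}\nu(d)$, and then bounding $\sum_{d<z^2, d\mid P(z)} 1 \le z^2$ crudely while $\max \prod_{p\mid d}\bigl(K\nu(p)/(1-\nu(p)/p)^{?}\bigr)$ stays $\le V(z)^{-K}$. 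I would therefore carry out the estimate as: bound the number of terms by $z^2$, bound each term by $V(z)^{-K}$ via the per-prime inequality above applied only to the primes actually dividing $d$ together with $(1-\nu(p)/p)^{-K}\ge 1$ for the remaining primes $p<z$, and conclude
\[
    \sum_{\substack{d<z^2\\ d\mid P(z)}} K^{\go(d)}|R_d| \;\le\; \frac{z^2}{V(z)^K}.
\]
I would point to the analogous elementary computation in~\cite[Lemma 8.1]{DHR} or the standard treatment in~\cite{FI} for the per-prime inequality, since it is entirely routine once stated correctly.
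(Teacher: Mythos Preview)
Your proposal has a genuine gap. The paper's proof (deferred to \cite[Lemma~4.3]{DHR}) hinges on Rankin's trick, and neither of your two attempts captures it. In your first attempt you drop the constraint $d<z^2$ outright and bound the sum by $\prod_{p<z}(1+K\nu(p))$; but $\nu(p)$ may be as large as $p-1$, so this product is of order $\prod_{p<z}p = e^{(1+o(1))z}$, hopelessly larger than $z^2/V(z)^K$. You correctly diagnose this, but your fallback---bound the number of terms by $z^2$ and bound each summand $K^{\go(d)}\nu(d)$ by $V(z)^{-K}$---also fails: already for a single prime $p<z$ with $\nu(p)$ close to $p$, the term $K\nu(p)$ exceeds $(1-\nu(p)/p)^{-K}$ once $p$ is moderately large, so no uniform per-term bound of the required strength exists.

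The missing idea is to exploit the constraint $d<z^2$ \emph{multiplicatively} rather than by counting terms: since $d<z^2$ gives $z^2/d\ge 1$, one has
\[
\sum_{\substack{d<z^2\\ d\mid P(z)}}K^{\go(d)}|R_d|
\;\le\; \sum_{\substack{d<z^2\\ d\mid P(z)}}K^{\go(d)}\nu(d)
\;\le\; z^2\sum_{d\mid P(z)}\frac{K^{\go(d)}\nu(d)}{d}
\;=\; z^2\prod_{p<z}\Bigl(1+\frac{K\nu(p)}{p}\Bigr),
\]
and now the per-prime factor is $1+K\nu(p)/p$ rather than $1+K\nu(p)$. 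The elementary inequality $1+Kx\le(1-x)^{-K}$ for $K>0$ and $0\le x<1$ (the function $(1+Kx)(1-x)^K$ equals $1$ at $x=0$ and has nonpositive derivative) then yields $\prod_{p<z}(1+K\nu(p)/p)\le V(z)^{-K}$, which is exactly the claimed bound. This is precisely Rankin's trick as the paper indicates.
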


\begin{proof}
    The proof relies on the use of  Rankin's trick. See the proof of~\cite[Lemma 4.3]{DHR}.
\end{proof}

\section{Richert's weighted sieve}

Let $0 < \ga < \gb < \frac{1}{k}$ be parameters to be chosen later. We write $z = N^\ga$ and $y = N^\gb$, and let $r$ be a positive integer such that $r+1 > \frac{1}{\gb}$. Define
\[
    \eta = r + 1 - \frac{1}{\gb},
\]
then $\eta > 0$. Following Richert~\cite{Ric} and Irving~\cite{Irv}, we consider the weighted sum
\begin{equation}\label{E:W}
    W = W(\scrA,r,\ga,\gb) = \sum_{\substack{n \in \scrA\\ (n,P(z))=1}} \biggl( 1 - \frac{1}{\eta} \sum_{\substack{z \le p < y\\ p|n}} \biggl(1 - \frac{\log p}{\log y} \biggr) \biggr).
\end{equation}

We will show in Section 5 that
\begin{equation}\label{E:W_estimate}
    W \gg_{\ga,\gb,r} \frac{x}{(\log x)^2}.
\end{equation}
For now, we shall suppose that the above holds and prove that $\scrA$ contains $\gg \frac{x}{(\log x)^2}$ elements that are $P_r$-numbers.

First, we show that there is a suitable bound for the number of $n$ in $\scrA$ that are divisible by $p^2$ for $z \le p < y$. Observe that
\begin{align*}
    \sum_{z \le p < y} \sum_{\substack{x < q \le 2x\\ f(q) \equiv 0 \pmod{p^2}}} 1
        &\le \sum_{z \le p < y} \sum_{\substack{x < n \le 2x\\ f(n) \equiv 0 \pmod{p^2}}} 1\\
        &\ll_f \sum_{z \le p < y} \left( \frac{x}{p^2} + 1 \right) \le \frac{x}{z} + y = o\left( \frac{x}{(\log x)^2} \right).
\end{align*}
Thus the number of these elements are negligible given the estimate~\eqref{E:W_estimate} and can therefore be absorbed into the error term.

We now consider the elements $n$ in $\scrA$ that are free from the divisors $p^2$, where $z \le p < y$. If $(n,P(z)) = 1$, then $n$ makes a positive contribution to $W$ if and only if
\[
    1 - \frac{1}{\eta} \sum_{\substack{z \le p < y\\ p|n}} \left( 1 - \frac{\log p}{\log y} \right) > 0.
\]
This means that
\[
    \sum_{\substack{p < y\\ p|n}} \left( 1 - \frac{\log p}{\log y} \right) < r + 1 - \frac{1}{\gb}.
\]
On the other hand, it is still possible that $n$ has a repeated prime divisor $q$.
In this case, however, $q$ must be $\ge y$, and so we have
\[
    1 - \frac{\log q}{\log y} \le 0.
\]
Consequently, we have
\begin{align*}
    \sum_{\substack{p < y\\ p|n}} \left( 1 - \frac{\log p}{\log y} \right) + \sum_{\substack{q,a\\ q \ge y\\ q^a|n}} \left( 1 - \frac{\log q}{\log y} \right)< r+1-\frac{1}{\gb}.
\end{align*}
It then follows that
\begin{align*}
    \gO(n)
        &= \#\set{p|n : p < y} + \#\set{q^a|n : q \ge y}\\
        &< r + 1 - \frac{1}{\gb} + \frac{\log n}{\log y}\\
        &= r + 1 + \frac{1}{\gb} \biggl( \frac{\log n}{\log N} - 1 \biggr) \le r + 1.
\end{align*}
This shows that $\scrA$ contains $\gg \frac{x}{(\log x)^2}$ elements that are $P_r$-numbers, all of which have prime divisors $\ge z$.

\section{Solutions to Differential-Difference Equations}

We are interested in following continuous solutions $f_\gk$ and $F_\gk$ to certain differential-difference equations arising in sieve theory. These are the upper and lower sifting functions that occur in the DHR sieve method.

\begin{thm}\label{Th:DHRfF}~\cite[Theorem 6.1]{DHR}
    Let $\gk$ be a positive integer. Let $\s_k$ be the continuous solution to the system
    \begin{align}
        s^{-\gk} \s_{\gk}(s) &= \frac{(2e^\gr)^{-\gk}}{\gG(\gk+1)}, \quad 0 < s \le 2,\label{E:sigma_k}\\
        \frac{d}{ds}(s^{-\gk} \s_{\gk}(s)) &= -\gk s^{-\gk - 1} \s_{\gk}(s-2), \quad s > 2.\label{E:sigma_dde}
    \end{align}
    Then there exist $\ga_\gk, \gb_\gk \in \R^+$ such that the system
    \begin{align}
        F_\gk(s) &= \frac{1}{\s_\gk(s)}, \quad 0 < s \le \ga_\gk,\label{E:F_init}\\
        f_\gk(s) &= 0, \quad 0 < s \le \gb_\gk, \label{E:f_init}\\
        \frac{d}{ds}(s^{\gk} F_\gk(s)) &= \gk s^{\gk-1} f_\gk(s-1), \quad s > \ga_\gk,\label{E:ddF}\\
        \frac{d}{ds}(s^{\gk} f_\gk(s)) &= \gk s^{\gk-1} F_\gk(s-1), \quad s > \gb_\gk,\label{E:ddf}
    \end{align}
    with boundary conditions
    \[
        F_\gk(s) = 1 + O(e^{-s}) \quad \text{and} \quad f_\gk(s) = 1 + O(e^{-s})
    \]
    has continuous solutions $F = F_\gk$ and $f = f_\gk$ with the property that $F$ decreases monotonically and $f$ increases monotonically on $(0,\oo)$.
\end{thm}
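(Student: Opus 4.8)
\emph{Plan of proof.} The construction is by the method of steps; the two parameters $\ga_\gk,\gb_\gk$ are then pinned down by a shooting (continuity) argument, and the analytic heart is an asymptotic analysis of the delay equations via their adjoint. I would begin with the auxiliary function $\s_\gk$. Equations~\eqref{E:sigma_k}--\eqref{E:sigma_dde} recover $\s_\gk$ on each interval $(2m,2m+2]$ from its values on $(2m-2,2m]$ by a single quadrature, so $\s_\gk$ exists and is continuous on $(0,\oo)$. Differentiating~\eqref{E:sigma_dde} gives $\s_\gk'(s)=\tfrac{\gk}{s}\bigl(\s_\gk(s)-\s_\gk(s-2)\bigr)$ for $s>2$, so an induction over intervals of length $2$ (base case $\s_\gk(s)=\tfrac{(2e^\gr)^{-\gk}}{\gG(\gk+1)}s^\gk$, increasing on $(0,2]$) shows $\s_\gk$ is positive and increasing on $(0,\oo)$; integrating~\eqref{E:sigma_dde} and estimating $\int t^{-\gk-1}\s_\gk(t-2)\,dt$ then gives $\s_\gk(s)\to1$, which is the step that uses the precise normalising constant. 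In particular $1/\s_\gk$ is positive, decreasing, and $\ge1$ throughout, so it is a legitimate choice for the initial segment of $F_\gk$.

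For a trial pair $(\ga,\gb)\in(\R^+)^2$ I would recast~\eqref{E:ddF}--\eqref{E:ddf} in the first-order forms
\[
    F'(s)=\frac{\gk}{s}\bigl(f(s-1)-F(s)\bigr),\qquad f'(s)=\frac{\gk}{s}\bigl(F(s-1)-f(s)\bigr),
\]
and build $F,f$ by the method of steps: with~\eqref{E:F_init}--\eqref{E:f_init} as data on $(0,\ga]$ and $(0,\gb]$, each successive unit interval carries a linear ODE whose right-hand side is already known, solvable by one further quadrature; this yields continuous solutions on $(0,\oo)$, unique once $(\ga,\gb)$ is fixed. Setting $u=F-f$ and $v=F+f$, one finds past the initial ranges that $u'(s)=-\tfrac{\gk}{s}\bigl(u(s)+u(s-1)\bigr)$ and $\tfrac{d}{ds}\bigl(s^{\gk}v(s)\bigr)=\gk s^{\gk-1}v(s-1)$. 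A sign/contraction analysis of the first equation forces $u(s)\to0$, so $F$ and $f$ have a common limit; the second equation has the feature that every solution tends to a constant $\l=\l(\ga,\gb)$, whence $F,f\to\tfrac12\l$.

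The crux is to arrange $\tfrac12\l(\ga_\gk,\gb_\gk)=1$. For this I would bring in the adjoint function — the solution $q_\gk$ of the companion advanced delay equation associated with~\eqref{E:ddF}--\eqref{E:ddf}, normalised by $q_\gk(s)\to1$ — together with the bilinear quantity that is conserved along the flow (the classical device for linear delay equations, as in de~Bruijn's and Iwaniec's analyses). Evaluating this quantity between the junctions $\ga,\gb$ and $s\to\oo$ represents $\l(\ga,\gb)$ as an explicit, continuous function of the prescribed initial data ($1/\s_\gk$ on $(0,\ga]$ and $0$ on $(0,\gb]$); a continuity/intermediate-value argument then produces an admissible pair $(\ga_\gk,\gb_\gk)$ for which $\l=2$, so that $F_\gk,f_\gk\to1$, and the sharper boundary behaviour $1+O(e^{-s})$ is read off from the same equations, the non-constant part decaying faster than any power of $s$. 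Finally, the monotonicity of $F_\gk$ and $f_\gk$ follows for this choice by a maximum-principle induction on unit intervals: the invariant $0\le f_\gk\le1\le F_\gk$ propagates through the first-order forms above and simultaneously yields $F_\gk'\le0$ and $f_\gk'\ge0$, the base cases being $f_\gk\equiv0$ on $(0,\gb_\gk]$ and $F_\gk=1/\s_\gk\ge1$ on $(0,\ga_\gk]$.

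The main obstacle is the asymptotic step: not merely showing that $F_\gk$ and $f_\gk$ converge, but evaluating the common limit precisely enough — through the adjoint identity, in which the exact normalisation of $\s_\gk$ is decisive — to guarantee that it equals $1$ for an admissible parameter pair, and then squeezing the error down to $O(e^{-s})$; checking that the monotonicity is compatible with this choice of $(\ga_\gk,\gb_\gk)$ is the secondary difficulty. The method-of-steps construction and the continuity of $\l$ in $(\ga,\gb)$ are comparatively routine.
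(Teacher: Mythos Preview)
The paper does not prove this theorem at all: it is quoted verbatim as \cite[Theorem 6.1]{DHR} and then used as a black box in Sections~4--6. There is therefore no ``paper's own proof'' to compare your proposal against.

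That said, your sketch is a reasonable outline of the standard argument one finds in the Diamond--Halberstam--Richert literature. The method-of-steps construction, the change of variables to $u=F-f$ and $v=F+f$, the use of the adjoint (the $p$- and $q$-functions in the DHR framework) to pin down the limiting constant, and the shooting argument to locate $(\ga_\gk,\gb_\gk)$ are all ingredients that appear in Chapters~5--6 of \cite{DHR}. One point worth flagging: the actual determination of $\ga_\gk$ and $\gb_\gk$ in \cite{DHR} is not done by a pure intermediate-value argument on a generic pair $(\ga,\gb)$, but rather through explicit extremal characterisations tied to the Ankeny--Onishi functions and the function $P_\gk = F_\gk + f_\gk - 2$; the two parameters are not independent but are coupled through the requirement that $F_\gk$ remain monotone past $\ga_\gk$ and that $f_\gk$ lift off from zero continuously at $\gb_\gk$. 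Your description of this as a two-parameter shooting problem glosses over that coupling, which is where most of the technical work in \cite{DHR} actually lies. But since the paper under review simply imports the result, none of this is required here.
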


When $\gk = 1$, it is known that $\ga_1 = \gb_1 = 2$~\cite[Table 17.1]{DHR}. It is also known that
\[
    f_1(s) =
    \begin{cases}
        0 &s \le 2,\\
        \dfrac{2e^\gr \log(s-1)}{s} &2 < s \le 4;
    \end{cases}
\]
while for $0 < s \le 3$,
\[
    F_1(s) = \dfrac{2e^\gr}{s}.
\]
Using~\eqref{E:ddF}, we see that for $3 < s \le 5$,
\begin{align*}
    F_1(s) &= \frac{1}{s} \biggl( 3F_1(3) + \int_3^s \frac{2e^\gr \log(t-2)}{t-1} \, dt \biggr)\\
           &= \frac{2e^\gr}{s} \biggl( 1 + \int_3^s \frac{\log(t-2)}{t-1}, dt \biggr).
\end{align*}
For $4 < s \le 6$, it follows at once from~\eqref{E:ddf} that
\begin{align*}
    f_1(s) &= \frac{1}{s} \biggl( 4f_1(4) + \int_4^s F_1(t-1) \, dt \biggr)\\
           %&= \frac{2e^\gr}{s} \left\{ \log 3 + \int_4^s \frac{1}{t-1} \biggl( 1 + \int_3^t \frac{\log(u-2)}{u-1} \, du \biggr) dt \right\}\\
           &= \frac{2e^\gr}{s} \left\{ \log(s-1) + \int_4^s \int_3^t \frac{\log(u-2)}{u-1} \, du \, dt \right\}.
\end{align*}
We pause to remark that for $\gk = 1$, there is no essential difference between the beta sieve and the DHR sieve.

For $\gk = 2$, it is known that $\ga_2 = 5.3577...$ and $\gb_2 = 4.2664...$. Using~\eqref{E:sigma_k} and~\eqref{E:sigma_dde}, one could deduce similarily to $F_1$, that
\[
    F_2(s) =
    \begin{cases}
        \dfrac{8e^{2\gr}}{s^2}, &0 < s \le 2,\\
        \dfrac{4e^{2\gr}}{2(s-1)^2-s^2\log(\frac{s}{2})}, &2 < s \le 4,\\
        \displaystyle \left[ \frac{(9 - 8\log 2)s^2}{32 e^{2\gr}} - \frac{s^2}{2e^{2\gr}} \int_4^s \frac{2(t-3)^2 - (t-2)^2 \log(\frac{t-2}{2})}{t^3} \, dt \right]^{-1}, &4 < s \le \ga_2.
    \end{cases}
\]
We refrain from writing down $F_2(s)$ explicitly for $s > \ga_2$ for it involves a complicated expression. The evaluation of $F_2(s)$ for $s > \ga_2$ will require numerical techniques.

\section{Sieve Estimates}

Recall that we seek to estimate the sum $W$. To ease notation, we follow Irving and write
\[
    w_p = 1 - \frac{\log p}{\log y}.
\]
$W$ can then be written as
\[
    W = S(\scrA,z) - \frac{1}{\eta} \sum_{z \le p < y} w_p S(\scrA_p,z).
\]
Let $\gd \in (\ga, \gb)$ and denote $u = N^\gd$, where $\gd < \frac{1}{2k}$. Then $W$ may be written as
\[
    W = S(\scrA,z) - \frac{1}{\eta} (S_1^\ast + S_2^\ast),
\]
where
\[
    S_1^\ast = \sum_{z \le p < u} w_p S(\scrA_p,z) \quad \text{and} \quad S_2^\ast = \sum_{u \le p < y} w_p S(\scrA_p,z).
\]

We consider the linearized problem for $S(\scrA,z)$ and $S_1^\ast$ in order to apply the
Bombieri--Vinogradov theorem, and the non-linearized problem for $S_2^\ast$ for the elements outside
of the level of distribution applicable to the Bombieri--Vinogradov theorem.

Since $N = \max \scrA$, we have $N \ll x^k$ and so $N^{1/k} \ll x$. This suggests that the
appropriate level of distribution for $S(\scrA,z)$ is $\gth_1 < \frac{1}{2k}$. For $S_1^\ast$, our sequence
is $\scrA_p$, thus the level of distribution is $N^{\gth_1}/p$.

For $S_2^\ast$, the sifting sequence is $\scrA_p'$, where
\[
	\scrA'= \set{ nf(n) : n \in (x,2x] };
\]
we apply a two-dimensional sieve to $\scrA_p'$ with $N^{\gth_2}/p$, where $\gth_2 < \frac{1}{k}$. Then the level of distribution is $N^{\gth_2}/p$.

First, we apply the one-dimensional DHR sieve to $S(\scr{A},z)$. By~\cite[Theorem 7.1]{DHR}, we have
\begin{equation}\label{E:S(A,z)}
	S(\scr{A},z)
        \ge XV_1(z) \biggl\{ f_1\biggl(\frac{\gth_1}{\ga}\biggr) - O \biggl( \frac{(\log \log N^{\gth_1})^{3/4}}{(\log N^{\gth_1})^{1/4}} \biggr) \biggr\} - \sum_{\substack{d < N^{\gth_1}\\ d|P(z)}} 4^{\go(d)} |R_d|,
\end{equation}
where
\[
	V_1(z) = \prod_{p < z} \biggl( 1 - \frac{\nu_1(p)}{\gf(p)} \biggr).
\]
Observe that
\[
    \frac{(\log \log N^{\gth_1})^{3/4}}{(\log N^{\gth_1})^{1/4}} \ll \frac{1}{(\gth_1 \log N)^{1/8}} \ll_{\gth_1, f} \frac{1}{(\log x)^{1/8}} = o(1).
\]
The sum on the right-hand side of~\eqref{E:S(A,z)} is bounded by
\begin{equation}\label{E:E1_bound}
    \sum_{\substack{d < N^{\gth_1}\\ d|P(z)}} 4^{\go(d)} \abs{R_d} \le \sum_{\substack{d < N^{\gth_1}\\ d|P(z)}} (4k)^{\go(d)} E(x,d) + \sum_{\substack{d < N^{\gth_1}\\ d|P(z)}} (4k)^{\go(d)}.
\end{equation}
The first sum on the right-hand of~\eqref{E:E1_bound} can be estimated using Lemma~\ref{L:E_hard}; we have
\[
    \sum_{\substack{d < N^{\gth_1}\\ d|P(z)}} (4k)^{\go(d)} E(x,d) \ll_{\gth_1} \frac{x}{(\log x)^{A}}.
\]
Next, we use Lemma~\ref{L:E_easy} to bound the second sum on the right-hand of~\eqref{E:E1_bound}
\[
    \sum_{\substack{d < N^{\gth_1}\\ d|P(z)}} (4k)^{\go(d)} \le N^{\gth_1} (\log z)^{4k}.
\]
Observe that the first estimate dominates the second one. Moreover,
\[
    V_1(z) \gg \frac{1}{\log z}
\]
and so the error term is
\[
    \sum_{\substack{d < N^{\gth_1}\\ d|P(z)}} 4^{\go(d)} |R_d| = o(XV_1(z))
\]
Equation \eqref{E:S(A,z)} may therefore be written as
\begin{equation}\label{E:S(A,z)2}
    S(\scrA, z) \ge XV_1(z) \biggl\{ f_1\biggl(\frac{\gth_1}{\ga}\biggr) - o(1) \biggr\}.
\end{equation}

Applying the one-dimensional upper bound sieve, we arrive at the following lemma.

\begin{lemma}
    If $0 < \ga < \gd < \gth_1$, then
    \[
        S_1^\ast \le XV_1(z) \biggl\{ \int_\ga^\gd \biggl( \frac{1}{s} - \frac{1}{\gb} \biggr) F_1\biggl(\frac{\gth_1-s}{\ga}\biggr) \, ds + o(1) \biggr\}.
    \]
\end{lemma}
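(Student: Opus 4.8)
The plan is to apply the one-dimensional DHR upper bound sieve to each inner sum $S(\scrA_p,z)$ individually and then recombine by partial summation, in exact parallel with the derivation of \eqref{E:S(A,z)2}. First fix a prime $p$ with $z\le p<u$. Since every $d\mid P(z)$ is composed of primes $<z\le p$, we have $(d,p)=1$ and $(\scrA_p)_d=\scrA_{pd}$ with
\[
    |\scrA_{pd}|=\frac{\nu_1(p)}{p-1}\cdot\frac{\nu_1(d)}{\gf(d)}\,X+R_{pd},
\]
so $\scrA_p$ obeys the sieve axioms with ``size'' $\tfrac{\nu_1(p)}{p-1}X$, the same density $\nu_1$, and level of distribution $N^{\gth_1}/p$. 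Writing $s=\tfrac{\log p}{\log N}\in[\ga,\gd)$ so that $\tfrac{\log(N^{\gth_1}/p)}{\log z}=\tfrac{\gth_1-s}{\ga}$, I would invoke \cite[Theorem 7.1]{DHR} to get
\[
    S(\scrA_p,z)\le\frac{\nu_1(p)}{p-1}\,X\,V_1(z)\Bigl\{F_1\Bigl(\tfrac{\gth_1-s}{\ga}\Bigr)+o(1)\Bigr\}+\sum_{\substack{d<N^{\gth_1}/p\\ d\mid P(z)}}4^{\go(d)}|R_{pd}|,
\]
the $o(1)$ being uniform in $p$ because $\log(N^{\gth_1}/p)\ge(\gth_1-\gd)\log N\gg\log x$ and because $\tfrac{\gth_1-s}{\ga}$ ranges over a fixed compact subinterval of $(0,\oo)$ on which $F_1$ and $F_1'$ are bounded.

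Multiplying by $w_p$ and summing over $z\le p<u$ splits $S_1^\ast$ into: (i) a main term $XV_1(z)\sum_{z\le p<u}w_p\tfrac{\nu_1(p)}{p-1}F_1\bigl(\tfrac{\gth_1-s}{\ga}\bigr)$; (ii) an $o(1)$-remainder bounded by $XV_1(z)\cdot o(1)\cdot\sum_{z\le p<u}\tfrac{\nu_1(p)}{p-1}$; and (iii) the error $\sum_{z\le p<u}w_p\sum_{d<N^{\gth_1}/p,\,d\mid P(z)}4^{\go(d)}|R_{pd}|$. For (iii) I would use $0\le w_p\le1$ and set $m=pd$: each squarefree $m<N^{\gth_1}$ arises in at most $\go(m)$ such ways, so (iii) is $\le\sum_{m<N^{\gth_1}}\go(m)4^{\go(m)}|R_m|\le\sum_{m<N^{\gth_1}}8^{\go(m)}|R_m|$ using $\go(m)\le2^{\go(m)}$; bounding $|R_m|$ in terms of $E(x,m)$ and $\nu_1(m)$ exactly as in \eqref{E:E1_bound} and then applying Lemma~\ref{L:E_hard} (with $8k$ in place of $4k$, legitimate since $N^{\gth_1}\ll x^{k\gth_1}$ and $k\gth_1<\tfrac12$) together with Lemma~\ref{L:E_easy} shows (iii) $\ll x(\log x)^{-A}+x^{k\gth_1}(\log x)^{8k}$. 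Since $X\sim x/\log x$, $V_1(z)\gg 1/\log z$, and $\sum_{z\le p<u}\tfrac{\nu_1(p)}{p-1}=\log\tfrac{\gd}{\ga}+o(1)=O(1)$ by partial summation from \eqref{E:nu_1_sum}, both (ii) and (iii) are $o(XV_1(z))$.

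It then remains to show the main sum in (i) equals $\int_\ga^\gd\bigl(\tfrac1s-\tfrac1\gb\bigr)F_1\bigl(\tfrac{\gth_1-s}{\ga}\bigr)\,ds+o(1)$. I would set $g(t)=\bigl(1-\tfrac{\log t}{\log y}\bigr)F_1\bigl(\tfrac{\gth_1-\log t/\log N}{\ga}\bigr)$, so that $g(p)=w_pF_1\bigl(\tfrac{\gth_1-s}{\ga}\bigr)$, and apply partial summation against the Mertens estimate $\sum_{p\le t}\tfrac{\nu_1(p)\log p}{p-1}=\log t+O(1)$ of \eqref{E:nu_1_sum}: writing $\tfrac{\nu_1(p)}{p-1}=\tfrac{\nu_1(p)\log p}{p-1}\cdot\tfrac1{\log p}$, the leading part becomes $\int_z^u\tfrac{g(t)}{t\log t}\,dt$, which the substitution $t=N^s$ converts into $\int_\ga^\gd\tfrac{g(N^s)}{s}\,ds=\int_\ga^\gd\bigl(\tfrac1s-\tfrac1\gb\bigr)F_1\bigl(\tfrac{\gth_1-s}{\ga}\bigr)\,ds$, while the $O(1)$ term, after integration by parts, contributes $\ll\sup_{[z,u]}\bigl|\tfrac{g(t)}{\log t}\bigr|+\int_z^u\bigl|\tfrac{d}{dt}\tfrac{g(t)}{\log t}\bigr|\,dt\ll\tfrac1{\log z}+\int_z^u\tfrac{dt}{t(\log t)^2}=o(1)$, the estimate $\bigl|\tfrac{d}{dt}\tfrac{g(t)}{\log t}\bigr|\ll\tfrac1{t(\log t)^2}$ following from $g=O(1)$ and $g'=O\bigl(\tfrac1{t\log N}\bigr)$ on the relevant range (here using that $F_1,F_1'$ are bounded there). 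Assembling (i), (ii), (iii) gives the stated inequality.

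The main obstacle is bookkeeping rather than anything conceptual: one must keep the sieve's $o(1)$ and the level constraint $N^{\gth_1}/p$ uniform across the whole range $z\le p<u$, and check that collapsing $\sum_p\sum_d|R_{pd}|$ to $\sum_m|R_m|$ costs only a benign power of $\go(m)$ so that Lemma~\ref{L:E_hard} still applies — which is precisely where $\gd<\gth_1<\tfrac1{2k}$ enters.
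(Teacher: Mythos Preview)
Your proposal is correct and follows the same strategy as the paper: apply the DHR upper bound sieve to each $S(\scrA_p,z)$ with level $N^{\gth_1}/p$, control the combined remainder via Lemma~\ref{L:E_hard}, and convert the main sum into the stated integral by partial summation against~\eqref{E:nu_1_sum}. The only differences are cosmetic --- the paper's auxiliary function is your $g(t)/\log t$, and for the remainder the paper observes that since $p\ge z$ while $d\mid P(z)$, each $m=pd$ arises from a \emph{unique} pair $(p,d)$, so no $\go(m)$ multiplicity factor (and hence no passage from $4^{\go}$ to $8^{\go}$) is needed.
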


\begin{proof}
    We apply the one-dimensional upper bound DHR sieve with level of distribution $N^{\gth_1}/p$ to each $p$ to obtain
    \begin{align*}
      S(\scrA_p,z) \le \frac{\nu_1(p)}{\gf(p)} XV_1(z) \biggl\{ F\biggl( \frac{\log(N^{\gth_1}/p)}{\log z} \biggr) + O\biggl( &\frac{(\log\log(N^{\gth_1}/p))^{3/4}}{(\log(N^{\gth_1}/p))^{1/4}} \biggr) \biggr\}\\
        &\phantom{blahblahblah}+ \sum_{\substack{d < N^{\gth_1}/p\\ d|P(z)}} 4^{\go(d)} |R_{pd}| .
    \end{align*}
    Since $p < N^\gd$ and $\gd < \gth_1$, it follows that
    \begin{align*}
        \frac{(\log \log(N^{\gth_1}/p))^{3/4}}{\log(N^{\gth_1}/p)^{1/4}}
            &\ll \frac{1}{\log(N^{\gth_1}/p)^{1/8}}\\
            &\le \frac{1}{((\gth_1-\gd) \log N)^{1/8}} \ll_{\gth_1,\gd,f} \frac{1}{(\log x)^{1/8}}.
    \end{align*}

    For the error term, we appeal to Lemma~\ref{L:E_hard} to get
    \begin{align*}
        \sum_{z \le p < u} w_p \sum_{\substack{m < N^{\gth_1}/p\\ m|P(z)}} 4^{\go(m)} |R_{pm}|
            &\le \sum_{d \le N^{\gth_1}} \mu^2(d) 4^{\go(d)} |R_d|\\
            &\ll_{\gth_1} \frac{x}{(\log x)^4} = o(XV_1(z)).
    \end{align*}

    Denote
    \[
        s_p = \frac{\log(N^{\gth_1}/p)}{\log z}.
    \]
    Then
    \begin{equation}
        \sum_{z \le p < u} w_p S(\scrA_p,z) \le XV_1(z) \sum_{z \le p < u} \frac{\nu_1(p)}{\gf(p)} w_p \biggl( F_1(s_p) + O\biggl( \frac{1}{(\log x)^{\frac{1}{8}}} \biggr) \biggr) + o(XV_1(z)).
    \end{equation}
    Note that since $0 < \ga < \gd$, it follows that
    \[
        1 - \frac{\log p}{\log u} \le 1 - \frac{\log z}{\log u} = 1 - \frac{\ga}{\gb} < 1,
    \]
    and so
    \begin{align*}
        \sum_{z \le p < u} \frac{\nu_1(p)}{\gf(p)} w_p (\log x)^{-\frac{1}{8}}
            &< (\log x)^{-\frac{1}{8}} \sum_{z \le p < u} \frac{\nu_1(p)}{p-1}\\
            &\ll (\log x)^{-\frac{1}{8}} \sum_{z \le p < u} \frac{1}{p} \ll \frac{\log \log u}{(\log x)^{\frac{1}{8}}} = o(1).
    \end{align*}
	For the main term in (17), define
	\[
		g(t) := \biggl( \frac{1}{\log t} - \frac{1}{\log y} \biggr) F_1\biggl(
        \frac{\log(N^{\gth_1}/t)}{\log z} \biggr) \quad \text{and} \quad C(z,t) := \sum_{z \le p < t} \frac{\nu_1(p) \log p}{p-1}.
	\]
	Partial summation gives us
	\begin{align*}
		\sum_{z \le p < u} w_p \frac{\nu_1(p)}{\gf(p)} F_1(s_p)
			&= \sum_{z \le p < u} \frac{\nu_1(p) \log p}{p-1} g(p)\\
			&= \int_z^u g(t) \, dC(z,t)\\
            &= g(u) C(z,u) - \int_z^u C(z,t) g'(t) \, dt.
	\end{align*}
    We use~\eqref{E:nu_1_sum} to obtain
    \[
        g(u) C(z,u) = g(u)(\log u - \log z + O(1)).
    \]
    Note that
    \[
        g(u) = g(N^\gd) = \biggl( \frac{1}{\gd \log N} - \frac{1}{\gb \log N} \biggr) F_1\biggl( \frac{\gth_1-\gd}{\ga} \biggr) \ll \frac{1}{\log N} = o(1).
    \]

	Next, let $s = \frac{\log t}{\log N}$, then
    \[
      	\int_z^u |g'(t)| \, dt = \int_\ga^\gd \biggl| \frac{dg(N^s)}{N^s \log N \, ds} \biggr| N^s
      \log N \, ds = \int_\ga^\gd  \biggl| \frac{d g(N^s)}{ds} \biggr| \, ds.
    \]
	However,
	\[
		g(N^s) = \Bigl( \frac{1}{s \log N} - \frac{1}{\gb \log N} \Bigr) F_1\Bigl( \frac{\gth_1 -
          s}{\ga} \Bigr)
	\]
    and that
    \begin{align*}
        \frac{dg(N^s)}{ds}
            &= \frac{-1}{s^2 \log N} F_1\Bigl(\frac{\gth_1-s}{\ga}\Bigr) - \frac{1}{\ga s \log N} F_1'\Bigl( \frac{\gth_1 - s}{\ga} \Bigr) + \frac{1}{\ga \gb \log N} F_1' \Bigl( \frac{\gth_1 - s}{\ga} \Bigr)\\
            &= \frac{-\ga \gb F_1( \frac{\gth_1-s}{\ga} ) + s(s-\gb) F_1'( \frac{\gth_1-s}{\ga} ) }{\ga \gb s^2 \log N} \ll \frac{1}{\log N} = o(1).
    \end{align*}
    Therefore
    \[
        \int_z^u C(z,t) g'(t) \, dt = \int_z^u [ \, \log u - \log z + O(1)] \, g'(t) \, dt,
    \]
    and so
    \begin{align*}
        \sum_{z \le p < u} w_p \frac{\nu_1(p)}{\gf(p)} F_1(s_p)
            &= g(u) C(z,u) - \int_z^u C(z,t) g'(t) \, dt\\
            &= (\log u - \log z + O(1))g(u) - \int_z^u (\log u - \log z)g'(t) \, dt - \int_z^u O(g'(t)) \, dt\\
            &= (\log u - \log z)g(u) - \int_z^u (\log u - \log z) g'(t) \, dt + o(1)\\
            &= \int_z^u \frac{g(t)}{t} \, dt + o(1)\\
            &= \int_z^u \frac{1}{t} \Bigl( \frac{1}{\log t} - \frac{1}{\log y} \Bigr) F_1\Bigl( \frac{\log(N^{\gth_1}/t)}{\log z} \Bigr) \, dt + o(1)\\
            &= \int_\ga^\gd \Bigl( \frac{1}{s} - \frac{1}{\gb} \Bigr) F_1\Bigl( \frac{\gth_1 - s}{\ga} \Bigr) \, ds + o(1).
    \end{align*}
    This concludes the proof of the lemma.
\end{proof}

The estimate for $S_2^\ast$ is more involved. We first provide an estimate for $S(\scrA_p,z)$ using a two-dimensional sieve with level of distribution $N^{\gth_2}/p$, where $p \in [z,y)$ and $\gth_2 < \frac{1}{k}$.

\begin{lemma}
    Suppose $\ga < \frac{1}{k}$ and $\gb < \gth_2 < \frac{1}{k}$. Then for any $p$ such that $p \in [z,y)$, we have
    \[
        S(\scrA_p,z) \le \frac{x\nu_1(p)}{p} V_2(z)\biggl( F_2(s'_p) + O\biggl(\frac{1}{(\log x)^{1/8}}\biggr) \biggr),
    \]
    where
    \[
        V_2(z) = \prod_{p < z} \biggl( 1 - \frac{\nu_2(p)}{p} \biggr) \quad \text{and} \quad s_p' = \frac{\log(N^{\gth_2}/p)}{\log z}.
    \]
\end{lemma}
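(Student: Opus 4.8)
The plan is to bound $S(\scrA_p, z)$ by applying the two-dimensional DHR upper bound sieve to the auxiliary sequence $\scrA_p'$, exactly as the paragraph preceding the lemma sets up. First I would observe that for $n \in (x,2x]$ with $p \mid f(n)$, the element $nf(n)$ of $\scrA'$ is divisible by $p$; more precisely, the residue classes modulo $p$ contributing to $\scrA_p'$ are governed by $\nu_2$ away from $p$ and by $\nu_1(p)$ at $p$ itself. So I would factor $\scrA_p' = \scrA'_{(p)}$ as a subsequence supported on a fixed residue class mod $p$, of cardinality $\sim (\nu_1(p)/p)\cdot x$ up to an admissible error, and then sieve the remaining (coprime-to-$p$) part by $P(z)$ with the multiplicative density $\nu_2$. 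Here the dimension is $\gk = 2$, which is consistent with the Mertens estimate \eqref{E:nu_2_prod} and hence with hypothesis \eqref{E:gO_condition}.

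Next I would invoke \cite[Theorem 7.1]{DHR} (the analogue of the one-dimensional bound used for $S(\scrA,z)$ in \eqref{E:S(A,z)}, but in dimension $2$) with level of distribution $N^{\gth_2}/p$. This gives
\[
    S(\scrA_p,z) \le \frac{x\nu_1(p)}{p} V_2(z)\biggl( F_2\biggl(\frac{\log(N^{\gth_2}/p)}{\log z}\biggr) + O\biggl(\frac{(\log\log(N^{\gth_2}/p))^{3/4}}{(\log(N^{\gth_2}/p))^{1/4}}\biggr)\biggr) + \sum_{\substack{d < N^{\gth_2}/p\\ d|P(z)}} 4^{\go(d)}|R'_{pd}|,
\]
where $R'_e = |\scrA'_e| - (\nu_2(e)/e)\cdot x$ for $e$ coprime to $p$. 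The error factor $(\log\log(N^{\gth_2}/p))^{3/4}/(\log(N^{\gth_2}/p))^{1/4}$ is handled just as in the previous lemma: since $p < y = N^\gb$ and $\gb < \gth_2$, we have $\log(N^{\gth_2}/p) \ge (\gth_2 - \gb)\log N \gg_f \log x$, so this term is $\ll_{\gth_2,\gb,f} (\log x)^{-1/8} = o(1)$. Because $F_2$ is bounded below away from $0$ on any compact set where $s'_p$ ranges (and $s'_p > 0$ since $\gth_2 < \frac{1}{k}$ forces $N^{\gth_2}/p > z$ for $p < y$ and $\gb < \gth_2$), this relative error is genuinely $O((\log x)^{-1/8})$ inside the bracket.

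The main obstacle is the error sum. I would bound $|R'_{pd}|$ by passing to $\p(x; pd, \cdot)$-type quantities: counting $n \in (x,2x]$ with $pd \mid nf(n)$ splits, via CRT over the squarefree modulus, into at most $\nu_2$ residue classes for the $f(n)$ part and the single class $n \equiv 0$ for the $n$ part, so $|R'_{pd}| \ll \sum_{e|pd} (\text{something}) \, E(x,e) + (\text{divisor-type remainder})$, and summing $4^{\go(d)}$ against this over $d < N^{\gth_2}/p$, then over $p \in [z,y)$ with weight $w_p$, collapses to a sum of the shape $\sum_{m \le N^{\gth_2}} \mu^2(m) K^{\go(m)} E(x,m)$ for a suitable constant $K$ (some fixed multiple of $k$). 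Since $\gth_2 < \frac{1}{k} \le \frac{1}{2}$ — wait, one must check $\gth_2 < \frac{1}{2}$; indeed $\gth_2 < \frac{1}{k}$ and $k \ge 2$ give $\gth_2 < \frac{1}{2}$, so Bombieri--Vinogradov applies — Lemma~\ref{L:E_hard} yields $\ll x/(\log x)^{A}$ for any $A$, which is $o(x V_2(z)/\log x) = o(\frac{x\nu_1(p)}{p} V_2(z))$ after summing, so it is absorbed. The divisor-type leftover is bounded by Lemma~\ref{L:E_easy}, giving $\ll N^{\gth_2}(\log z)^{K}$, which is dominated since $\gth_2 < 1$. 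Assembling these, the error is $o$ of the main term uniformly in $p \in [z,y)$, which gives the stated bound with $V_2(z) = \prod_{p<z}(1 - \nu_2(p)/p)$ and $s'_p = \log(N^{\gth_2}/p)/\log z$. The one delicate point to get right is the bookkeeping of which prime (the sifting primes $<z$, versus the fixed prime $p$, versus possible coincidences $p < z$) carries $\nu_1$ versus $\nu_2$ weight; since we only need $p \in [z,y)$, we have $p \ge z$, so $p$ is automatically coprime to $P(z)$ and the factorization $\scrA_p' \rightsquigarrow$ (class mod $p$) $\times$ (sieve by $\nu_2$) is clean.
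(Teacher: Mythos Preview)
Your setup is correct: pass to $\scrA'=\{nf(n):n\in(x,2x]\}$, apply the two-dimensional DHR upper bound with level $N^{\gth_2}/p$, and the secondary error term is indeed $O((\log x)^{-1/8})$ since $p<N^{\gb}$ and $\gb<\gth_2$. The gap is in your handling of the remainder sum. The sequence $\scrA'$ is indexed by \emph{all integers} $n\in(x,2x]$, not by primes, so for $d\mid P(z)$ and $(d,p)=1$ the remainder
\[
R'_{pd}\;=\;\#\{n\in(x,2x]:\,p\mid f(n),\ d\mid nf(n)\}\;-\;\frac{\nu_1(p)\nu_2(d)}{pd}\,x
\]
is just the error in placing integers into $\nu_1(p)\nu_2(d)$ residue classes modulo $pd$; it satisfies $|R'_{pd}|\le\nu_1(p)\nu_2(d)\ll_k d^{\ve}$ and has nothing to do with $\pi(x;\cdot,\cdot)$ or $E(x,\cdot)$. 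There is no ``passing to $\pi$-type quantities'' available here, and Lemma~\ref{L:E_hard} is simply inapplicable. The paper instead uses Lemma~\ref{L:E_easy}: the trivial bound $|R'_{pd}|\ll\nu_2(d)$ feeds directly into the Rankin estimate, giving
$\sum_{d<N^{\gth_2}/p,\,d\mid P(z)}4^{\go(d)}|R'_{pd}|\ll (N^{\gth_2}/p)\,V_2(z)^{-4}$,
and since $N^{\gth_2}\ll x^{k\gth_2}$ with $k\gth_2<1$ this is $\ll x^{1-\ve}/p$, a genuine power saving that is easily $\ll (x\nu_1(p)/p)V_2(z)(\log x)^{-1/8}$.

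There is a second, structural flaw in your Bombieri--Vinogradov reasoning that explains why the trivial bound is not merely a convenience but the whole point. The moduli in your proposed error sum run up to $N^{\gth_2}$, and since $N\sim a_k x^{k}$ this means moduli up to roughly $x^{k\gth_2}$ with $k\gth_2$ close to $1$. Bombieri--Vinogradov only controls $E(x,m)$ for $m\le x^{1/2}(\log x)^{-B}$, so even if the remainders did involve $E(x,m)$, the argument would force $k\gth_2<\tfrac12$, i.e.\ $\gth_2<\tfrac{1}{2k}$ --- exactly the same restriction as on $\gth_1$. The entire purpose of replacing primes by integers (and hence the one-dimensional sieve by the two-dimensional one) is precisely that the remainders become trivially small, so no equidistribution input is needed and the level of distribution can be pushed all the way to $N^{\gth_2}$ with $\gth_2$ arbitrarily close to $\tfrac{1}{k}$. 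Your check ``$\gth_2<\tfrac{1}{k}\le\tfrac12$'' conflates the exponent of $N$ with the exponent of $x$ and misses this doubling.
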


\begin{proof}
    Suppose that $p \ge z$, then
    \begin{align}\label{E:2d_S(Ap,z)}
        S(\scrA_p,z) %&= \# \{ f(q) \in \scrA : x < q \le 2x, (f(q), P(z)) = 1, p|f(q) \} \notag\\
                     &= \# \{ q \in (x,2x] : p|f(q), (qf(q), P(z)) = 1 \} \notag\\
                     &\le \# \{ n \in (x,2x] : p|f(n), (nf(n),P(z)) = 1 \}.
    \end{align}
    If we let
    \[
        \scrA' = \set{ nf(n) : n \in (x,2x] }.
    \]
    Then the inequality~\eqref{E:2d_S(Ap,z)} tells us that we can apply an upper bound sieve to the sequence $\scrA'_p$. Let $d|P(z)$ and $p$ sufficiently large so that $p \nmid f(0)$. Then $(d,p) = 1$ and by the Chinese Remainder Theorem, we have
    \[
        \# \{ a \pmod{dp} : f(a) \equiv 0 \pmod{p}, af(a) \equiv \pmod{d} \} = \nu_1(p) \nu_2(d).
    \]
    Also, for any $\ve > 0$,
    \[
        \nu_1(p) \ll 1 \quad \text{and} \quad \nu_2(d) \ll_\ve d^\ve.
    \]
    Consequently, we have
    \begin{align*}
        \# \{ n \in (x,2x] : p|f(n), d|nf(n) \}
            = \sum_{\substack{a \pmod{p}\\ f(a) \equiv 0 \pmod{p}\\ af(a) \equiv 0 \pmod{d}}} \biggl( \frac{x}{pd} + O(1) \biggr)
            = \frac{\nu_1(p)}{p} \frac{\nu_2(d)}{d} x + O(d^\ve).
    \end{align*}
    We apply the two-dimensional DHR sieve~\cite[Lemma 9.3]{DHR} with level of distribution $N^{\gth_2}/p$ to get
    \begin{align*}
        S(\scrA_p,z) \le \frac{x \nu_1(p)}{p} V_2(z) \biggl\{ F_2\biggl( \frac{\log (N^{\gth_2}/p)}{\log z} \biggr) &+ O\biggl( \frac{(\log \log N^{\gth_2})^2}{(\log (N^{\gth_2}/p))^{1/6}} \biggr) \biggr\}\\
        &\phantom{blahblah} + 2 \sum_{\substack{d < N^{\gth_2}/p\\ d|P(z)}} 4^{\go(d)} |R_{pd}|.
    \end{align*}
    First note that since $p < N^\gb$ and $\gb < \gth_2$, it follows that
    \[
        \frac{(\log \log N^{\gth_2})^2}{(\log (N^{\gth_2}/p))^{1/6}} \ll \frac{1}{(\log (N^{\gth_2}/p))^{1/8}} \ll_{\gth_2,f} \frac{1}{(\log x)^{1/8}}.
    \]
    The error term can be estimated as following using Lemma~\ref{L:E_easy}.
    \[
        \sum_{\substack{d < N^{\gth_2}/p\\ d|P(z)}} 4^{\go(d)} \abs{R_{pd}}
            \le \frac{N^{\gth_2}}{p} V_2(z)^{-4}
            \ll_\ve \frac{x^{1-\ve}}{p} \frac{V_2(z)}{V_2(z)^5} \ll \frac{x}{p} \frac{V_2(z)}{(\log x)^{1/8}}. \qedhere
    \]
\end{proof}

\begin{lemma}
    Let $\ga$, $\gb$, and $\gth_2$ satisfy the hypotheses of the previous lemma. If $\ga < \gd < \gb$, then
    \[
        S_2^\ast \le XV_1(z) \biggl\{ \frac{e^{-\gr}}{k\ga} \int_\ga^\gb \biggl( \frac{1}{s} - \frac{1}{\gb} \biggr) F_2\biggl( \frac{\gth_2-s}{\ga} \biggr)  ds + o(1) \biggr\}
    \]
\end{lemma}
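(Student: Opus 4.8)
The plan is to feed the upper bound for $S(\scrA_p,z)$ from the previous lemma into $S_2^\ast=\sum_{u\le p<y}w_pS(\scrA_p,z)$, then turn the resulting sum over primes into an integral by partial summation exactly as in the proof of the lemma bounding $S_1^\ast$, and finally rewrite the prefactor $xV_2(z)$ in terms of $XV_1(z)$. Since $[u,y)\subset[z,y)$, the previous lemma applies for every $p$ in range; using $0<w_p<1$ this gives
\[
S_2^\ast\le xV_2(z)\Biggl(\sum_{u\le p<y}w_p\frac{\nu_1(p)}{p}F_2(s'_p)+O\bigl((\log x)^{-1/8}\bigr)\sum_{u\le p<y}\frac{\nu_1(p)}{p}\Biggr),
\]
with $s'_p=\log(N^{\gth_2}/p)/\log z$. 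As $\nu_1(p)\le k$, the last sum is $\ll\log\log N$, so this error contributes $\ll xV_2(z)(\log x)^{-1/8}\log\log N$, which will turn out to be $o(XV_1(z))$ once the densities are compared in the last step.

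For the main sum I would set $g(t)=w_tF_2\!\bigl(\log(N^{\gth_2}/t)/\log z\bigr)/\log t$ and $C(u,t)=\sum_{u\le p<t}\nu_1(p)\log p/p$; since $\nu_1(p)\log p\,\bigl(\tfrac1{p-1}-\tfrac1p\bigr)$ is summable, \eqref{E:nu_1_sum} gives $C(u,t)=\log t-\log u+O(1)$. Partial summation yields $\sum_{u\le p<y}w_p\tfrac{\nu_1(p)}{p}F_2(s'_p)=-\int_u^yC(u,t)g'(t)\,dt$, the boundary term dropping out because $w_y=0$ makes $g(y^-)=0$. Inserting $C(u,t)=\log t-\log u+O(1)$, the contribution $O\bigl(\int_u^y|g'|\bigr)$ is $O(1/\log N)$ after the substitution $s=\log t/\log N$, while a further integration by parts---whose boundary term at $u$ also vanishes---collapses the main part to $\int_u^yg(t)\,dt/t=\int_\gd^\gb\bigl(\tfrac1s-\tfrac1\gb\bigr)F_2\!\bigl(\tfrac{\gth_2-s}{\ga}\bigr)ds$. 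This is the same computation as in the $S_1^\ast$ lemma with $F_2,\gth_2$ in place of $F_1,\gth_1$ and the lower limit $\gd$ (from $u=N^\gd$) in place of $\ga$, so I would simply refer to it.

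It remains to compare the densities. By \eqref{E:nu_1_prod}, \eqref{E:nu_2_prod} and $X\sim x/\log x$,
\[
\frac{xV_2(z)}{XV_1(z)}=\frac{x}{X}\cdot\frac{V_2(z)}{V_1(z)}=\log x\cdot\frac{e^{-2\gr}\ss(f)/(\log z)^2}{e^{-\gr}\ss(f)/\log z}\,(1+o(1))=\frac{e^{-\gr}\log x}{\log z}\,(1+o(1)),
\]
and since $\log z=\ga\log N$ with $\log N\sim k\log x$, the right side tends to $e^{-\gr}/(k\ga)$; thus $xV_2(z)=\tfrac{e^{-\gr}}{k\ga}XV_1(z)(1+o(1))$, which also turns the error of the first step into $o(XV_1(z))$. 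Collecting everything, $S_2^\ast\le\tfrac{e^{-\gr}}{k\ga}XV_1(z)\int_\gd^\gb(\tfrac1s-\tfrac1\gb)F_2(\tfrac{\gth_2-s}{\ga})\,ds+o(XV_1(z))$; since $\tfrac1s-\tfrac1\gb>0$ on $[\ga,\gb)$ and $F_2>0$, the integrand is nonnegative there, so replacing the lower limit $\gd$ by $\ga$ only weakens the inequality, giving the stated bound.

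Nothing here is deep; the two places needing care are keeping the partial summation honest at the endpoints---painless here because $w_y=0$---and the density comparison, where the constant $e^{-\gr}/(k\ga)$ comes out only by matching the $(\log z)^{-2}$ versus $(\log z)^{-1}$ behaviour of $V_2$ against $V_1$ together with the $\log x$ from $x/X$ and $\log z=\ga\log N\sim\ga k\log x$. I expect this last point to be the main thing to get exactly right, and would also flag that the gap between the range $[\gd,\gb]$ produced by the argument and the range $[\ga,\gb]$ in the statement is harmless precisely because the integrand is positive.
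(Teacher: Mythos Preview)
Your proposal is correct and follows essentially the same route as the paper: apply the previous lemma termwise, handle the error by $\nu_1(p)\ll 1$ and Mertens, run the same partial-summation argument as in the $S_1^\ast$ lemma with $F_2,\gth_2$ in place of $F_1,\gth_1$, and finish by converting $xV_2(z)$ to $\tfrac{e^{-\gr}}{k\ga}XV_1(z)$ via \eqref{E:nu_1_prod}, \eqref{E:nu_2_prod} and the Prime Number Theorem. The paper's own computation in fact produces the integral over $[\gd,\gb]$ (and indeed uses that range when assembling the lower bound for $W$), so your explicit remark that enlarging the range to $[\ga,\gb]$ only weakens the inequality, by positivity of the integrand, is a point you cover that the paper leaves tacit.
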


\begin{proof}
    By Lemma 5.2, we immediately have
    \[
        S_2^\ast \le \sum_{u \le p < y} w_p \frac{x\nu_1(p)}{p} V_2(z) \biggl( F_2(s_p') + O \biggl( \frac{1}{(\log x)^{1/8}}\biggr) \biggr).
    \]
    The error term can be bounded by noting that
    \[
        \sum_{u \le p < y} w_p \frac{\nu_1(p)}{p} \frac{1}{(\log x)^{1/8}} \ll \frac{1}{(\log x)^{1/8}} \sum_{u \le p < y} \frac{1}{p} = o(1).
    \]
    For the main term, let
    \[
        h(t) = \biggl( \frac{1}{\log t} - \frac{1}{\log y} \biggr) F_2\biggl( \frac{\log(N^{\gth_2}/p)}{\log z} \biggr)
    \]
    and
    \[
        D(u,t) := \sum_{u \le p < t} \frac{\nu_1(p) \log p}{p}.
    \]
    From the proof of Lemma 5.1, we know that $h(y)$ and $\int_u^y O(h'(t)) \, dt$ are both $o(1)$. We may thus use partial summation and~\eqref{E:nu_1_sum} to see that
    \begin{align}\label{E:S_2*}
        \sum_{u \le p < y} w_p \frac{\nu_1(p)}{p} F_2(s_p')
            &= \sum_{u \le p < y} \frac{\nu_1(p) \log p}{p} h(p)\notag\\
            &= \int_u^y h(t) \, dD(u,t)\notag\\
            &= (\log y - \log u)h(y) - \int_u^y (\log y - \log u) h'(t) \, dt + o(1)\notag\\
            &= \int_u^y \frac{h(t)}{t} \, dt + o(1)\notag\\
            &= \int_u^y \frac{1}{t} \biggl( \frac{1}{\log t} - \frac{1}{\log y} \biggr) F_2\biggl( \frac{\log(N^{\gth_2}/t)}{\log z} \biggr) \, dt + o(1)\notag\\
            &= \int_{\gd}^{\gb} \biggl( \frac{1}{s} - \frac{1}{\gb} \biggr) F_2\biggl( \frac{\gth_2 - s}{\ga} \biggr) \, ds + o(1).
    \end{align}
    Next, we observe that by~\eqref{E:nu_2_prod},
    \begin{equation}\label{E:V2}
        V_2(z) = \frac{V_1(z)}{\log z}(e^{-\gr} + o(1));
    \end{equation}
    and by the Prime Number Theorem,
    \begin{equation}\label{E:pnt}
        x \sim X \log x \sim \frac{1}{k\ga} X \log z.
    \end{equation}
    The lemma follows by combining~\eqref{E:S_2*} with~\eqref{E:V2} and~\eqref{E:pnt}.
\end{proof}

Using equation \eqref{E:S(A,z)2} and Lemmas 5.1 and 5.3, we have a lower estimate for $W$; namely
\begin{align*}
    W &= S(\scrA,z) - \frac{1}{\eta} (S_1^\ast + S_2^\ast)\\
      &\ge XV_1(z) \left\{ f_1\biggl(\frac{\gth_1}{\ga}\biggr) - \frac{1}{\eta} \biggl[ \int_\ga^\gd \biggl( \frac{1}{s} - \frac{1}{\gb} \biggr) F_1\biggl( \frac{\gth_1-s}{\ga} \biggr) \, ds \right.\\
      &\phantom{blahblahblahblahbla} + \left. \frac{e^{-\gr}}{k\ga} \int_\gd^\gb \left( \frac{1}{s} - \frac{1}{\gb} \right) F_2 \left( \frac{\gth_2 - s}{\ga} \right) \, ds + o(1) \biggr] \right\},
\end{align*}
provided that $0 < \ga < \gd < \gb < \gth_2 < \frac{1}{k}$ and $\gd < \gth_1 < \frac{1}{2k}$. By continuity of the sifting functions, for any $\ve > 0$, we may take $\gth_1$ and $\gth_2$ sufficiently close to $\frac{1}{2k}$ and $\frac{1}{k}$, respectively. Furthermore, the Prime Number Theorem gives us
\[
    XV_1(z) \gg \frac{x}{(\log x)^2}.
\]
We thus have the following lower bound for $W$.
\begin{align}\label{E:asymp_W}
    W &\gg \frac{x}{(\log x)^2} \left\{ f_1\biggl(\frac{1}{2k\ga}\biggr) - \frac{1}{r+1-\frac{1}{\gb}} \biggl[ \, \int_\ga^\gd \biggl( \frac{1}{s} - \frac{1}{\gb} \biggr) F_1\biggl( \frac{1-2ks}{2k\ga} \biggr) \, ds \right.\\
    &\phantom{blahblahblahblahbla} + \left. \frac{e^{-\gr}}{k\ga} \int_\gd^\gb \left( \frac{1}{s} - \frac{1}{\gb} \right) F_2 \left( \frac{1-ks}{k\ga} \right) \, ds - \ve + o(1) \biggr] \right\},\notag
\end{align}
in which we replaced $\eta$ with its definition $\eta = r + 1 - \frac{1}{\gb}$. It is now clear that one can deduce the results of Theorem~\ref{T:1} provided that one can find the suitable $\ga$, $\gd$, and $\gb$ satisfying
\[
    0 < \ga < \gd < \gb < \frac{1}{k}, \quad \gd < \frac{1}{2k}, \quad \text{and} \quad \gb > \frac{1}{r+1}
\]
such that
\begin{align*}
    f_1\biggl(\frac{1}{2k\ga}\biggr)
        &- \frac{1}{r+1-\frac{1}{\gb}} \biggl[ \, \int_\ga^\gd \biggl( \frac{1}{s} - \frac{1}{\gb} \biggr) F_1\biggl( \frac{1-2ks}{2k\ga} \biggr) \, ds\\
        &\phantom{blahblahblahblah}+ \frac{e^{-\gr}}{k\ga} \int_\gd^\gb \left( \frac{1}{s} - \frac{1}{\gb} \right) F_2 \left( \frac{1-ks}{k\ga} \right) \, ds \biggr] > 0.
\end{align*}
To ease notation, we let
\[
    \ga_0 = k\ga, \quad \gd_0 = k\gd, \quad \text{and} \quad \gb_0 = k\gb,
\]
and make the observation that the above is the equivalent of
\begin{align}\label{E:r_ineq}
	r &> \frac{k}{\gb_0} - 1 + \frac{1}{f_1(1/2\ga_0)} \biggl[ \, \int_{\ga_0}^{\gd_0} \biggl(
      \frac{1}{s} - \frac{1}{\gb_0} \biggr) F_1\biggl( \frac{1-2s}{2\ga_0} \biggr) \, ds\\
      &\phantom{blahblahblahblahblahblah} + \frac{e^{-\gr}}{\ga_0} \int_{\gd_0}^{\gb_0} \biggl( \frac{1}{s} - \frac{1}{\gb_0} \biggr) F_2\biggl( \frac{1-s}{\ga_0} \biggr) \, ds \biggr].\notag
\end{align}
In the next section, we complete the proof of Theorem~\ref{T:1} by finding the optimal values of $\ga_0$, $\gd_0$, and $\gb_0$ which will minimize $r$ with respect to a given $k$, where they also satisfy the conditions
\[
    0 < \ga_0 < \gd_0 < \gb_0 < 1, \quad \gd_0 < \frac{1}{2}, \quad \text{and} \quad \frac{\gb_0}{k} > \frac{1}{r+1}.
\]

\section{Proof of Theorem~\ref{T:1}}

In order to improve upon Irving's results, one must extend the ranges of the sifting functions $f_1$, $F_1$, and $F_2$ which appeared on the right-hand side of~\eqref{E:r_ineq}. In particular, we will consider $f_1(s)$ in the range of $2 \le s \le 6$, $F_1(s)$ in the range of $0 < s \le 5$, and $F_2(s)$ in the range of $0 < s < 7$. We remark all numerical computations of $f_1$, $F_1$, and $F_2$ are completed in {\it Mathematica} using the packaged developed by W.\ Galway~\cite{Gal}.

To this end, we take $\ga_0 = \frac{1}{12}$, so that $f_1(\frac{1}{2\ga_0}) = f_1(6)$. This immediately gives us
\[
    \frac{1-2s}{2\ga_0} = 6 - 12s \quad \text{and} \quad \frac{1-s}{\ga_0} = 12 - 12s.
\]
Next, observe that
\[
    0 < 6 - 12\gd_0 \le 3 \quad \text{if and only if} \quad \frac{1}{4} \le \gd_0 < \frac{1}{2};
\]
and
\[
	3 < 6 - 12s \le 5 \quad \text{if and only if} \quad \frac{1}{12} \le s < \frac{1}{4}.
\]
Therefore $F_1(6-12s)$ is non-elementary for $\ga_0 = \frac{1}{12} \le s < \frac{1}{4}$ and is elementary for $\frac{1}{4} \le s < \gd_0$. Similarly,
\[
	12 - 12s \le 2 \quad \text{if and only if} \quad s \ge \frac{5}{6},
\]
and it follows that $F_2(12-12s)$ is non-elementary for $\gd_0 \le s < \frac{5}{6}$ and is elementary for $s \ge \frac{5}{6}$. %We may thus rewrite the integrals in~\eqref{E:r_ineq} as
% \[
%     \int_{\frac{1}{12}}^{\gd_0} \biggl( \frac{1}{s} - \frac{1}{\gb_0} \biggr) F_1(6-12s) \, ds = I_{11}  + \frac{e^{\gr}}{3} I_{12},
% \]
% where
% \[
%   	I_{11} = \int_{\frac{1}{12}}^{\frac{1}{4}} \biggl( \frac{1}{s} - \frac{1}{\gb_0} \biggr)
%   F_1(6 - 12s) \, ds
%     \quad \text{and} \quad
%     I_{12} = \int_{\frac{1}{4}}^{\gd_0} \biggl( \frac{1}{s} - \frac{1}{\gb_0} \biggr) \frac{ds}{1-2s};
% \]
% and
% \[
%   	\int_{\gd_0}^{\gb_0} \biggl( \frac{1}{s} - \frac{1}{\gb_0} \biggr) F_2(12-12s) \, ds = I_{21} + \frac{e^{2\gr}}{18} I_{22},
% \]
% where
% \[
%   	I_{21} = \int_{\gd_0}^{\frac{5}{6}} \biggl( \frac{1}{s} - \frac{1}{\gb_0} \biggr) F_2(12-12s) \, ds
%     \quad \text{and} \quad
% 	I_{22} = \int_{\frac{5}{6}}^{\gb_0} \biggl( \frac{1}{s} - \frac{1}{\gb_0} \biggr) \frac{ds}{(1-s)^2}.
% \]
We seek the optimal choice of $\gd_0$ that minimizes the right-hand side of~\eqref{E:r_ineq}. This can be accomplished by finding the $\gd_0$ that minimizes the quantity inside the brackets of the inequality~\eqref{E:r_ineq}. In other words, we solve the equation
\[
    \frac{\del}{\del \gd_0} \biggl[ \, \int_{\frac{1}{12}}^{\gd_0} \biggl( \frac{1}{s} - \frac{1}{\gb_0} \biggr) F_1(6-12s) \, ds + \frac{12}{e^{\gr}} \int_{\gd_0}^{\gb_0} \biggl( \frac{1}{s} - \frac{1}{\gb_0} \biggr) F_2(12-12s) \, ds \biggr] = 0
\]
for $\gd_0$. Applying the Fundamental Theorem of Calculus, one sees that solving the above equation is equivalent to solving
\[
	\frac{e^\gr}{3} \frac{1}{1-2\gd_0} = \frac{12}{e^\gr} F_2(12 - 12\gd_0).
\]
Numerical computation yields $\gd_0 = 0.45804...$.

Observe that~\eqref{E:r_ineq} implies that
\[
    r > \frac{k}{\gb_0} - 1.
\]
In other words,
\[
    \frac{\gb_0}{k} > \frac{1}{r+1}.
\]
We now proceed to find the admissible $\gb_0$ that will minimize the right-hand side of~\eqref{E:r_ineq}. To do this, we first set
\begin{align}\label{E:rkgb_0}
    r(k,\gb_0) = \frac{k}{\gb_0} - 1 + \frac{1}{f_1(6)} \biggl[ \, \int_{\frac{1}{12}}^{\gd_0} &\biggl( \frac{1}{s} - \frac{1}{\gb_0} \biggr) F_1(6-12s) \, ds \biggr.\\
    &\biggl. + \frac{12}{e^\gr} \int_{\gd_0}^{\gb_0} \biggl( \frac{1}{s} - \frac{1}{\gb_0} \biggr) F_2(12 - 12s) \, ds \biggr].\notag
\end{align}
Taking the partial derivative of $r(k,\gb_0)$ with respect to $\gb_0$, we get
\begin{equation}\label{E:del_r}
    \frac{\del}{\del \gb_0} r(k,\gb_0) = -\frac{k}{\gb_0^2} + \frac{1}{f_1(6)} \biggl[ \, \frac{1}{\gb_0^2} \int_{\frac{1}{12}}^{\gd_0} F_1(6-12s) \, ds + \frac{12}{e^\gr} \frac{1}{\gb_0^2} \int_{\gd_0}^{\gb_0} F_2(12 - 12s) \, ds \biggr].
\end{equation}
For $k = 2, \ldots, 6$, we set~\eqref{E:del_r} to zero and simplify to get
\[
    \int_{\gd_0}^{\gb_0} F_2(12 - 12s) \, ds = \frac{e^\gr}{12} \biggl[ f_1(6)k - \int_{\frac{1}{12}}^{\gd_0} F_1(6 - 12s) \, ds \biggr].
\]
Numerical computations suggest that the optimal $\gb_0$'s are less than $\frac{5}{6}$. Therefore $F_2(12-12s)$ is non-elementary in the interval $[\gd_0, \gb_0)$. We approxmiate the optimal values of $\beta_0$ numerically and obtain the desired values for $r(k,\gb_0)$.

\begin{center}
\begin{table}[h]
\renewcommand{\arraystretch}{1.2}
\begin{tabular}{@{}clllll@{}}\toprule
	         $k$ &      2 &      3 &      4 &      5 &      6 \\ \hline
	     $\gb_0$ & 0.6131 & 0.6968 & 0.7552 & 0.7969 & 0.8265 \\ \hline
	$r(k,\gb_0)$ & 3.9667 & 5.4803 & 6.8645 & 8.1510 & 9.3819 \\ \bottomrule
\end{tabular}
\label{Table:2}
\vspace{12pt}
\caption{$r(k,\gb_0)$ for $k = 2, \ldots, 6$}
\end{table}
\end{center}

For $k \ge 7$, $\gb_0$ will be greater than $\frac{5}{6}$. In this case, we may  write~\eqref{E:rkgb_0} as
\begin{align*}
	r(k,\gb_0) = \frac{k}{\gb_0} - 1 &+ \frac{1}{f_1(6)} \biggl[ \, \int_{\frac{1}{12}}^{\frac{1}{4}} \biggl( \frac{1}{s} - \frac{1}{\gb_0} \biggr) F_1(6 - 12s) \, ds + \frac{e^\gr}{3} \int_{\frac{1}{4}}^{\gd_0} \biggl( \frac{1}{s} - \frac{1}{\gb_0} \biggr) \frac{ds}{1-2s} \biggr.\\
    & \biggl. + \frac{12}{e^\gr} \int_{\gd_0}^{\frac{5}{6}} \biggl( \frac{1}{s} - \frac{1}{\gb_0} \biggr) F_2(12-12s) \, ds + \frac{2e^{\gr}}{3} \int_{\frac{5}{6}}^{\gb_0} \biggl( \frac{1}{s} - \frac{1}{\gb_0} \biggr) \frac{ds}{(1-s)^2} \biggr].\notag
\end{align*}
Moreover, equation~\eqref{E:del_r} becomes
\begin{align}
	\frac{\del}{\del \gb_0} r(k,\gb_0) = -\frac{k}{\gb_0^2} + \frac{1}{f_1(6)} \biggl[
      	\frac{1}{\gb_0^2} \int_{\frac{1}{12}}^{\frac{1}{4}} &F_1(6-12s) \, ds - \frac{e^\gr}{6\gb_0^2} \log(2-4\gd_0)\\
		& + \frac{1}{\gb_0^2} \frac{12}{e^\gr} \int_{\gd_0}^{\frac{5}{6}} F_2(12-12s) \, ds + \frac{2e^\gr}{3} \frac{6\gb_0-5}{(1-\gb_0)\gb_0^2} \biggr].\notag
\end{align}
We note that
\[
    f_1(6) = 0.99989...,
\]
and write
\begin{align*}
    M_1 &= \frac{3}{2e^\gr} = 0.84218...,
	&&M_2 = \int_{\frac{1}{12}}^{\frac{1}{4}} F_1(6-12s) , ds = 0.17383...,\\
	M_3 &= -\frac{e^\gr}{6} \log(2-4\gd_0) = 0.52979...,
	&&M_4 = \frac{12}{e^\gr}\int_{\gd_0}^{\frac{5}{6}} F_2(12-12s) \, ds = 5.57453....
\end{align*}
Then we may set~\eqref{E:del_r} to zero and simplify to obtain
\begin{equation}\label{E:delr0}
	k = \frac{1}{f_1(6)} \left[ M_2 + M_3 + M_4 + \frac{1}{M_1} \frac{6\gb_0-5}{1-\gb_0} \right].
\end{equation}
Continuing to solve for $\gb_0$ yields
\[
	\gb_0 = 1 - \frac{1}{c_1 k + c_2},
\]
where,
\[
	c_1 = M_1 f_1(6) = 0.842101... \quad \text{and} \quad
    c_2 = -M_1(M_2 + M_3 + M_4) + 6 = 0.712608....
\]
We summarize the values of $r(k,\gb_0)$ for small $k$ in Table~\ref{Table:3} below.

\begin{center}
\begin{table}[h]
\renewcommand{\arraystretch}{1.2}
\begin{tabular}{@{}clllllllll@{}}\toprule
	         $k$ &    2 &    3 &    4 &    5 &    6 &     7 &     8 &     9 &    10\\ \hline
	     $\gb_0$ & 0.61 & 0.69 & 0.75 & 0.79 & 0.82 &  0.85 &  0.87 &  0.88 &  0.89\\ \hline
	$r(k,\gb_0)$ & 3.96 & 5.48 & 6.86 & 8.15 & 9.38 & 10.58 & 11.74 & 12.89 & 14.02\\ \hline
             $r$ &    4 &    6 &    7 &    9 &   10 &    11 &    12 &    13 &    15\\ \bottomrule
\end{tabular}
\vspace{6pt}
\caption{Results for small $k$}
\label{Table:3}
\end{table}
\end{center}

Lastly, for large $k$, we will take
\[
	\gb_0 = 1 - \frac{1}{k}.
\]
This immediately gives
\[
	\frac{k}{\gb_0} = \frac{k^2}{k-1} = k + O(1),
\]
while it is clear that
\[
	\int_{\frac{1}{12}}^{\frac{1}{4}} \biggl( \frac{1}{s} - \frac{1}{\gb_0} \biggr) F_1(6-12s) \,ds,
    \quad \int_{\frac{1}{4}}^{\gd_0} \biggl( \frac{1}{s} - \frac{1}{\gb_0} \biggr) \frac{ds}{1-2s},
    \quad \text{and} \quad \int_{\gd_0}^{\frac{5}{6}} \biggl( \frac{1}{s} - \frac{1}{\gb_0} \biggr)
    F_2(12-12s) \, ds
\]
are all $\ll 1$. It remains to show that
\begin{align*}
  	\int_{\frac{5}{6}}^{\gb_0} \biggl( \frac{1}{s} - \frac{1}{\gb_0} \biggr) \frac{ds}{(1-s)^2}
		&= \int_{\frac{5}{6}}^{\gb_0} \left[ \frac{1}{s} + \frac{1}{1-s} + \left( 1 -
          \frac{1}{\gb_0} \right) \frac{1}{(1-s)^2} \right] \, ds\\
		&= \left[ -\log(1-s) + \left( 1 - \frac{1}{\gb_0} \right) \frac{1}{1-s}
          \right]_{\frac{5}{6}}^{\gb_0} + O(1)\\
		&= \log k + O(1).
\end{align*}
We therefore have
\[
	r(k,\gb_0) = k + c \log k + O(1),
\]
where
\[
	c = \frac{2e^{\gr}}{3f_1(6)} = 1.18751....
\]

\section{Acknowledgements}

I would like to thank my advisor, Sid Graham, for his careful reading of this paper and considerable assistance on the content and the style of this paper. In addition, I also would like to thank Craig Franze for many helpful conversations and suggestions.

\bibliographystyle{amsplain}

\end{document}